\documentclass[11pt]{article}
\usepackage[margin=1.1in]{geometry}
\usepackage{amsmath,amssymb,amsthm,amsfonts}
\usepackage[T1]{fontenc}
\usepackage{graphicx}
\graphicspath{{figures/}{./}}
\usepackage{booktabs}
\usepackage{xcolor}
\usepackage[colorlinks=true, linkcolor=blue!60!black,
            citecolor=blue!60!black, urlcolor=blue!60!black]{hyperref}
\newif\ifcomments
\commentstrue

\newtheorem{assumption}{Assumption}
\newtheorem{theorem}{Theorem}
\newtheorem{lemma}{Lemma}
\newtheorem{corollary}{Corollary}
\newtheorem{proposition}{Proposition}
\newtheorem{definition}{Definition}

\newenvironment{IEEEproof}[1][Proof]%
  {\par\noindent\textit{#1:}\ }{\hfill$\blacksquare$\par}
\title{Hidden Star-Convexity in Policy Optimization\\
for Gain-Scheduled LQR: Extended Version\thanks{This is the extended
version of the IEEE Control Systems Letters
submission~\cite{letter2026}: all proofs omitted there for space
appear here, together with extended experiments.}}
\author{Shiva Shakeri \and P\'eter Baranyi \and Mehran Mesbahi}
\date{}
\begin{document}
\maketitle
\begin{abstract}
This document is the extended version of the letter ``Hidden
Star-Convexity in Policy Optimization for Gain-Scheduled LQR.'' It
contains the complete proofs of all results stated there and
additional experiments. Section and theorem numbers in the main body mirror the
letter; material exclusive to this document appears in the appendices.
\end{abstract}
% =========================================================================
% MAIN BODY: mirrors the letter section-for-section. Once the letter's
% sections are frozen, paste them here (with \commentsfalse) and add
% pointers into the appendices where proofs were omitted.
% =========================================================================
\section{Introduction}
% ---- citation keys are placeholders until the .bib is built ----
% [A]  fazel2018global  (+ optional rate companion: bu2019lqr / mohammadi2021)
% [B0] takarics2014wing (JGCD: aeroelastic wing via relaxed TP framework)
% [B]  rugh2000survey + baranyi2004tp
% [C]  tanaka2001pdc (or TP-control construction paper)
Policy optimization for a single linear quadratic regulator is by now well
understood: although the cost is a nonconvex function of the feedback gain,
gradient descent converges to the global optimum at a linear
rate~\cite{fazel2018global}, with the first-order analysis
in~\cite{bu2019lqr}. This behavior has structural roots: under a classical change of variables built on the
closed-loop covariance, the single-plant problem is
convex~\cite{watanabe2025hidden}. Gain scheduling is the standard route to
operating more general designs across an envelope, from flight control at varying
airspeed to aeroelastic vibration suppression~\cite{takarics2014wing}. The
plant is modeled as a linear parameter-varying system, commonly in polytopic
or tensor-product form~\cite{rugh2000survey,baranyi2004tp}, and the
controller is a schedule of vertex gains interpolated through the same fixed
weighting functions that define the model~\cite{tanaka2001pdc}. Optimizing
this schedule by gradient descent couples the entire plant family through a
single set of decision variables, and the single-plant guarantee does not
carry over: the scheduled cost can admit strictly suboptimal local
minima, and the convexifying change of variables is obstructed by the
shared gains.
% [G] tanaka2001pdc (+ optional Scherer-school guaranteed-cost ref)
% [D] dr-lqr 2503.24371 + maml-lqr 2401.14534
% [E] sof landscape 2310.19022 (+ optional Bu topology)
% [H] giessler2025lpv (arXiv:2505.22248)
For polytopic LPV plants, scheduled gains are classically synthesized by
convex methods: common-Lyapunov linear matrix inequality conditions
guarantee stability and a bound on the cost, at a conservatism set by the chosen
vertex representation~\cite{tanaka2001pdc}. Optimizing the schedule
directly against the quadratic cost removes this conservatism at the price
of non-convexity, and the available convergence guarantees are partial. In
domain-randomized and meta-learning formulations, where the objective
likewise averages LQR costs over sampled plants, global convergence holds
only under explicit heterogeneity
bounds~\cite{drlqr2025,mamllqr2024}, or for a single lifted static
gain rather than a schedule~\cite{pctlqr2026}. The
landscape literature on structured feedback also examines the complementary
obstruction: shared parametrizations break gradient dominance and admit
spurious minima, and no convergence rates on quantified regions accompany
these results~\cite{soflandscape2023}. For linearly constrained gain
structures, policy updates over the constraint submanifold, under
Riemannian metrics adapted to the feedback geometry, recover
convergence guarantees~\cite{talebi2024submanifolds}. Gradient flows have also been
employed online, as dynamic LPV controllers that converge to the pointwise
optimal gains for constant parameter trajectories~\cite{giessler2025lpv};
the coupled landscape of a shared schedule does not arise in such a setting. For
the same controller class, a receding-horizon reformulation restores
stage-wise convexity~\cite{shakeri2026rhpg}, but certifies a
finite-horizon surrogate rather than the infinite-horizon landscape.
No existing result certifies a convergence rate for a shared schedule
on arbitrary regions of interest, with constants that a user can evaluate.
The letter this document extends provides such a guarantee. The approach is structural: the
benign geometry of the single-plant problem survives weight sharing
in hidden form, and everything proved here follows from identifying that structure.
More explicitly, the contributions are as follows.
\begin{itemize}
\item We prove an exact identity: when the gradient of the cost is
evaluated with the closed-loop covariances at the minimizer, the scheduled cost is
star-convex about that minimizer, on the entire feasible set and for
every choice of vertex basis (\S\ref{sec:identity}).
\item We measure the gap between this substituted gradient and the
true one by a single dimensionless ratio, computable through
Lyapunov equations. Wherever the ratio stays below one on a sublevel
region, gradient descent converges linearly to the optimal schedule
at an explicit rate (\S\ref{sec:rate}).
\item The same ratio can also be used to detect lack of convergence:
at every spurious stationary point this ratio is shown to be larger
than one. A two-plant family with a strictly
suboptimal local minimum shows that such points occur
(\S\ref{sec:identity}, \S\ref{sec:numerics}).
\item Experiments probe the ratio by maximizing it directly: two
benchmarks keep it below one on every region tested, while on an
aeroelastic wing model and on the constructed counterexample the
maximized ratio crosses one at measured cost levels
(\S\ref{sec:numerics}).
\end{itemize}
Supporting proofs and additional experiments appear in the
appendices of this document.
\textit{Notation:} $\lVert\cdot\rVert$ denotes the Euclidean norm for
vectors and the Frobenius norm for matrices and tuples of matrices,
$\lVert\cdot\rVert_2$ the spectral norm, and $\lambda_{\min}(\cdot)$
the smallest eigenvalue of a symmetric matrix; $M \succ 0$
($M \succeq 0$) denotes positive (semi) definiteness, and
$\operatorname{tr}(\cdot)$ the trace;
$\langle X, Y\rangle=\operatorname{tr}(X^\top Y)$ is the trace inner
product, extended to tuples of matrices by summation over the tuple.
% ================= section skeleton (titles tentative) =================
\section{Preliminaries and Problem Formulation}\label{sec:formulation}
We consider a family of frozen-parameter plants on a finite design grid
$\{\rho_1,\dots,\rho_S\}$ with grid weights $\nu_s>0$,
$\sum_{s=1}^{S}\nu_s=1$:
\begin{equation}\label{eq:plant}
x_{t+1} = A(\rho_s)\,x_t + B(\rho_s)\,u_t, \qquad
\mathbb{E}\big[x_0x_0^\top\big]=\Sigma_0 .
\end{equation}
The tensor-product model transformation~\cite{baranyi2004tp} represents
the system matrices in the polytopic form,
\begin{equation}\label{eq:tp}
A(\rho)=\sum_{i=1}^{N} w_i(\rho)\,A_i, \qquad
B(\rho)=\sum_{i=1}^{N} w_i(\rho)\,B_i,
\end{equation}
with vertex systems $(A_i,B_i)$ and continuous weighting functions
satisfying $w_i(\rho)\ge 0$ and $\sum_{i=1}^{N}w_i(\rho)=1$.
The controller interpolates vertex gains $K_i\in\mathbb{R}^{m\times n}$
through the same weighting functions, the
parallel-distributed-compensation structure~\cite{tanaka2001pdc}:
collecting $\mathbf{K}=(K_1,\dots,K_N)$, the deployed gain at the $s$th
grid point is thereby $K_{\mathbf{K}}(\rho_s)=\sum_i w_i(\rho_s)K_i$, applied
as $u_t=-K_{\mathbf{K}}(\rho_s)\,x_t$.
For a gain $K$ that stabilizes plant $s$, i.e.,
$A_{\mathrm{cl}} = A(\rho_s)-B(\rho_s)K$ is Schur stable, the cost for
plant $s$ is,
\begin{equation}\label{eq:Js}
J_s(K)=\mathbb{E}\sum_{t=0}^{\infty}\big(x_t^\top Q x_t
 + u_t^\top R u_t\big)=\operatorname{tr}(P\,\Sigma_0),
\end{equation}
where $P\succ 0$ solves
$P=A_{\mathrm{cl}}^\top P A_{\mathrm{cl}} + Q + K^\top R K$. With the
closed-loop covariance $\Sigma$ solving
$\Sigma = A_{\mathrm{cl}}\Sigma A_{\mathrm{cl}}^\top + \Sigma_0$, the
input weight $H = R + B(\rho_s)^\top P B(\rho_s)$, and the gain residual
$E = HK - B(\rho_s)^\top P A(\rho_s)$, the gradient is
$\nabla_K J_s = 2E\Sigma$~\cite{fazel2018global}; $E=0$ corresponds to
the Riccati optimality condition for plant $s$. We write
$P_s,\Sigma_s,E_s,H_s$ for these matrix constructs at the deployed gain
$K_{\mathbf{K}}(\rho_s)$.
The design problem is to minimize the scheduled cost,
\begin{equation}\label{eq:cost}
\mathcal{J}(\mathbf{K})=\sum_{s=1}^{S}\nu_s\,
J_s\big(K_{\mathbf{K}}(\rho_s)\big)
\end{equation}
over the feasible set
$\mathcal{K}=\{\mathbf{K}: K_{\mathbf{K}}(\rho_s)\ \text{stabilizes
plant } s,\ s=1,\dots,S\}$, an open set. By the chain rule, the gradient
has vertex blocks
\begin{equation}\label{eq:grad}
\nabla_{K_i}\mathcal{J}(\mathbf{K}) =
2\sum_{s=1}^{S}\nu_s\,w_i(\rho_s)\,E_s\Sigma_s ,
\end{equation}
and it is through these shared sums that every vertex gain couples every
plant.
\textit{Assumptions:} $Q\succ0$, $R\succ0$, $\Sigma_0\succ0$;
$\mathcal{K}\neq\emptyset$; and $G_w\succ0$, where
$G_w=\sum_{s=1}^{S}\nu_s\,w(\rho_s)w(\rho_s)^\top$ is the Gram matrix
of the weight vector $w(\rho)=(w_1(\rho),\dots,w_N(\rho))^\top$ on the
grid, singular only when some direction in the vertex space is
invisible on the design grid.
Under these assumptions, $\mathcal{J}$ is coercive on $\mathcal{K}$
and attains its minimum at some
$\mathbf{K}^\star\in\mathcal{K}$ (Appendix~\ref{app:proofs}); we write
$\mathcal{J}^\star=\mathcal{J}(\mathbf{K}^\star)$ and
$\Sigma_s^\star$ for the (state) covariances at the optimal schedule.
\section{The Star Identity}\label{sec:identity}
\begin{definition}[star-convexity
\cite{nesterov2006cubic, necoara2019linear}]\label{def:star}
A differentiable function $f$ is \emph{star-convex} about a minimizer
$x^\star$ if $\langle\nabla f(x),\, x-x^\star\rangle \ge
f(x)-f(x^\star)$ for all $x$ in its domain, and star-convex \emph{with
modulus} $m>0$ if the right-hand side can be strengthened to
$f(x)-f(x^\star)+m\lVert x-x^\star\rVert^2$.
\end{definition}
For $m>0$ the above condition corresponds to the strongly quasar-convex class
of~\cite{hinder2020near}, for which accelerated first-order methods
with matching lower bounds are available.
The gradient \eqref{eq:grad} pairs each residual $E_s$ with the
covariance $\Sigma_s$ of the current schedule. Substituting the
optimizer's covariances yields,
\begin{equation}\label{eq:gstar}
\nabla^{\!\star}_{K_i}\mathcal{J}(\mathbf{K}) =
2\sum_{s=1}^{S}\nu_s\,w_i(\rho_s)\,E_s\Sigma_s^\star ,
\end{equation}
that we refer to as the \emph{starred gradient}. The two fields agree at
$\mathbf{K}^\star$, where $\Sigma_s=\Sigma_s^\star$, and differ
elsewhere only through the covariance mismatch. The starred field
evaluates each residual against the state statistics of the optimal
closed loop rather than those of the current one: the covariances
entering \eqref{eq:grad} describe how the current schedule distributes
the state (not how the optimum does).
Let $\Delta_s = K_{\mathbf{K}}(\rho_s)-K_{\mathbf{K}^\star}(\rho_s)$
denote the increments of the deployed gains, and define,
\begin{equation}\label{eq:QH}
Q_H(\mathbf{K}) = \sum_{s=1}^{S}\nu_s\,
\operatorname{tr}\!\big(\Sigma_s^\star\,\Delta_s^\top H_s\,\Delta_s\big),
\end{equation}
that measures the schedule increment in the geometry of the optimum,
with ``curvature'' $H_s$ and covariance $\Sigma_s^\star$.
Since $\Sigma_s^\star\succeq\Sigma_0$ and $H_s\succeq R$, each summand
of \eqref{eq:QH} satisfies
$\operatorname{tr}(\Sigma_s^\star\Delta_s^\top H_s\Delta_s)\ge
\lambda_{\min}(\Sigma_0)\,\lambda_{\min}(R)\,\lVert\Delta_s\rVert^2$;
and with $D_i=K_i-K_i^\star$ and the positive semidefinite Gram matrix
$\Gamma=[\operatorname{tr}(D_i^\top D_j)]_{i,j}$, the weighted sum
obeys $\sum_{s}\nu_s\lVert\Delta_s\rVert^2=\operatorname{tr}(G_w\Gamma)
\ge\lambda_{\min}(G_w)\,\lVert\mathbf{K}-\mathbf{K}^\star\rVert^2$,
where $G_w$ is the Gram matrix of \S\ref{sec:formulation}. Together
these expressions lead to,
\begin{equation}\label{eq:floor}
Q_H(\mathbf{K}) \ge \underline{\lambda}\,
\lVert\mathbf{K}-\mathbf{K}^\star\rVert^2,
\end{equation}
where $\underline{\lambda}=\lambda_{\min}(G_w)\,\lambda_{\min}(\Sigma_0)\,
\lambda_{\min}(R)$; the constant is positive by the standing
assumption $G_w\succ0$.
\begin{lemma}[star identity]\label{lem:star}
For every $\mathbf{K}\in\mathcal{K}$,
\begin{equation}\label{eq:star}
\big\langle \nabla^{\!\star}\mathcal{J}(\mathbf{K}),\,
\mathbf{K}-\mathbf{K}^\star \big\rangle
= \big(\mathcal{J}(\mathbf{K})-\mathcal{J}^\star\big)
+ Q_H(\mathbf{K}).
\end{equation}
\end{lemma}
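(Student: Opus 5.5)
The plan is to prove \eqref{eq:star} plant by plant, using the classical cost-difference (performance-difference) identity for LQR, and then to sum the per-plant identities with weights $\nu_s$. The shared-gain structure enters only through a linearity step that converts the vertex-block inner product into a sum over grid points.

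\emph{Step 1 (per-plant cost difference).} Fix $s$ and write $K=K_{\mathbf{K}}(\rho_s)$, $K^\star=K_{\mathbf{K}^\star}(\rho_s)$ and $\Delta_s=K-K^\star$. Let $P_s$, $H_s$, $E_s$ be evaluated at $K$, and let $(x_t)$ be the trajectory of plant $s$ under the optimal gain $u_t=-K^\star x_t$. Both gains stabilize plant $s$ because $\mathbf{K},\mathbf{K}^\star\in\mathcal{K}$. Hence $\mathbb{E}\,x_t^\top P_s x_t\to 0$, and the telescoping sum
\[
J_s(K^\star)=\mathbb{E}\sum_{t\ge0}\big(x_t^\top Qx_t+x_t^\top K^{\star\top}RK^\star x_t+x_{t+1}^\top P_s x_{t+1}-x_t^\top P_s x_t\big)+\operatorname{tr}(P_s\Sigma_0)
\]
is legitimate. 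Since $\sum_t\mathbb{E}\,x_tx_t^\top=\Sigma_s^\star$, this gives
\[
J_s(K^\star)-J_s(K)=\operatorname{tr}\!\Big(\Sigma_s^\star\big[Q+K^{\star\top}RK^\star+(A-BK^\star)^\top P_s(A-BK^\star)-P_s\big]\Big).
\]
Next substitute $K^\star=K-\Delta_s$ and eliminate $P_s$ through its Lyapunov equation $P_s=Q+K^\top RK+(A-BK)^\top P_s(A-BK)$. The bracket then collapses to
\[
-\Delta_s^\top(RK+B^\top P_sBK-B^\top P_sA)-(\cdots)^\top\Delta_s+\Delta_s^\top(R+B^\top P_sB)\Delta_s
=-\Delta_s^\top E_s-E_s^\top\Delta_s+\Delta_s^\top H_s\Delta_s .
\]
Using the symmetry of $\Sigma_s^\star$, this yields
\[
2\operatorname{tr}(\Sigma_s^\star\Delta_s^\top E_s)=J_s(K)-J_s(K^\star)+\operatorname{tr}(\Sigma_s^\star\Delta_s^\top H_s\Delta_s).
\]

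\emph{Step 2 (unsharing the inner product).} From \eqref{eq:gstar} and $D_i=K_i-K_i^\star$,
\[
\langle\nabla^{\!\star}\mathcal{J}(\mathbf{K}),\mathbf{K}-\mathbf{K}^\star\rangle=\sum_i\sum_s2\nu_s w_i(\rho_s)\operatorname{tr}(\Sigma_s^\star E_s^\top D_i).
\]
Exchanging the sums and using $\sum_i w_i(\rho_s)D_i=\Delta_s$, which holds because the deployed gain is linear in $\mathbf{K}$, turns this into $\sum_s\nu_s\,2\operatorname{tr}(\Sigma_s^\star\Delta_s^\top E_s)$.

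\emph{Step 3 (summation).} Multiply the per-plant identity of Step 1 by $\nu_s$ and sum over $s$. The left side matches Step 2, the cost differences sum to $\mathcal{J}(\mathbf{K})-\mathcal{J}^\star$ by \eqref{eq:cost}, and the quadratic terms sum to $Q_H(\mathbf{K})$ by \eqref{eq:QH}. This gives \eqref{eq:star}.

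The only delicate point is Step 1: the algebraic expansion must be carried out with $P_s$, $H_s$, $E_s$ taken at the \emph{current} gain and the covariance taken at the \emph{optimal} gain. That pairing is exactly what places $\Sigma_s^\star$ next to $E_s$. Telescoping in the opposite direction would instead produce $\Sigma_s$ with the optimal residual, and the identity would fail. Justifying the telescoping requires only that $K^\star$ stabilize plant $s$, so the identity holds on all of $\mathcal{K}$ with no further restriction.
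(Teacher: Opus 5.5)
Your proof is correct and follows the paper's route. Step 1 rederives, by telescoping, the per-plant almost-smoothness identity of Fazel et al.\ that the paper cites directly, and Steps 2--3 are the paper's aggregation over $s$ via $\Delta_s=\sum_i w_i(\rho_s)(K_i-K_i^\star)$, so your version is the same argument with the cited lemma proved inline.
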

\begin{IEEEproof}
For each $s$, both deployed gains stabilize plant $s$, so the
almost-smoothness identity of \cite{fazel2018global} applies:
\[
J_s\big(K_{\mathbf{K}^\star}(\rho_s)\big)
- J_s\big(K_{\mathbf{K}}(\rho_s)\big)
= -2\operatorname{tr}\!\big(\Sigma_s^\star\Delta_s^\top E_s\big)
+ \operatorname{tr}\!\big(\Sigma_s^\star\Delta_s^\top H_s\Delta_s\big).
\]
Multiply the above expression by $\nu_s$ and sum over $s$. Since
$\Delta_s=\sum_i w_i(\rho_s)(K_i-K_i^\star)$, the first terms aggregate
to $-\langle\nabla^{\!\star}\mathcal{J}(\mathbf{K}),
\mathbf{K}-\mathbf{K}^\star\rangle$ by \eqref{eq:gstar}, and the second
terms to $Q_H(\mathbf{K})$. Rearranging leads to \eqref{eq:star}.
\end{IEEEproof}
The identity does not depend on optimality of $\mathbf{K}^\star$:
replacing every occurrence of $\mathbf{K}^\star$ (in $\Delta_s$, in
$\Sigma_s^\star$, and in \eqref{eq:star} itself) by any other
feasible schedule leaves the proof unchanged.
Combining the identity \eqref{eq:star} with the floor \eqref{eq:floor}
yields, for every $\mathbf{K}\in\mathcal{K}$,
\begin{equation}\label{eq:hidden}
\big\langle \nabla^{\!\star}\mathcal{J}(\mathbf{K}),\,
\mathbf{K}-\mathbf{K}^\star \big\rangle
\ge \big(\mathcal{J}(\mathbf{K})-\mathcal{J}^\star\big)
+ \underline{\lambda}\,\lVert\mathbf{K}-\mathbf{K}^\star\rVert^2 ,
\end{equation}
that is Definition~\ref{def:star} for $\mathcal{J}$ with the gradient
replaced by the starred field: star-convexity about $\mathbf{K}^\star$
with modulus $\underline{\lambda}$, valid on all of $\mathcal{K}$, with
no hypothesis beyond the standing assumptions. The corresponding inequality for
$\nabla\mathcal{J}$ may fail, and spurious local minima can exist; in
this sense, the star-convexity is hidden. The extent by which the
inequality transfers from $\nabla^{\!\star}\mathcal{J}$ to
$\nabla\mathcal{J}$ is quantified by the following ratio.
Define the remainder field $\boldsymbol{\mathcal{R}}(\mathbf{K}) =
\tfrac{1}{2}\big(\nabla^{\!\star}\mathcal{J}(\mathbf{K})
-\nabla\mathcal{J}(\mathbf{K})\big)$ and, for
$\mathbf{K}\neq\mathbf{K}^\star$, the \emph{mismatch ratio}
\begin{equation}\label{eq:tau}
\tau(\mathbf{K}) =
\frac{2\,\langle\boldsymbol{\mathcal{R}}(\mathbf{K}),\,
\mathbf{K}-\mathbf{K}^\star\rangle}{Q_H(\mathbf{K})},
\end{equation}
well defined since $Q_H>0$ on
$\mathcal{K}\setminus\{\mathbf{K}^\star\}$ by \eqref{eq:floor}; the
ratio compares the contribution of the covariance mismatch along the
ray to $\mathbf{K}^\star$ against the quadratic term of the identity.
\begin{corollary}\label{cor:transfer}
For every $\mathbf{K}\in\mathcal{K}\setminus\{\mathbf{K}^\star\}$,
\begin{equation}\label{eq:transfer}
\langle \nabla\mathcal{J}(\mathbf{K}),\,
\mathbf{K}-\mathbf{K}^\star \rangle
= \mathcal{J}(\mathbf{K})-\mathcal{J}^\star
+ \big(1-\tau(\mathbf{K})\big)\,Q_H(\mathbf{K}).
\end{equation}
In particular, $\mathcal{J}$ satisfies the star-convexity inequality
at $\mathbf{K}$ if and only if $\tau(\mathbf{K})\le 1$.
\end{corollary}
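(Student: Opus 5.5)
The corollary follows from Lemma~\ref{lem:star} by pure algebra, with no further analysis of the Lyapunov or Riccati quantities. The plan is to rewrite the true gradient through the remainder field and substitute the star identity.

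First, by definition of $\boldsymbol{\mathcal{R}}$ we have $\nabla\mathcal{J}(\mathbf{K}) = \nabla^{\!\star}\mathcal{J}(\mathbf{K}) - 2\boldsymbol{\mathcal{R}}(\mathbf{K})$. Pairing both sides with $\mathbf{K}-\mathbf{K}^\star$ under the trace inner product on tuples gives
\[
\langle \nabla\mathcal{J}(\mathbf{K}),\mathbf{K}-\mathbf{K}^\star\rangle
= \langle \nabla^{\!\star}\mathcal{J}(\mathbf{K}),\mathbf{K}-\mathbf{K}^\star\rangle
- 2\langle \boldsymbol{\mathcal{R}}(\mathbf{K}),\mathbf{K}-\mathbf{K}^\star\rangle .
\]
Next, I replace the first term on the right by $(\mathcal{J}(\mathbf{K})-\mathcal{J}^\star)+Q_H(\mathbf{K})$ using \eqref{eq:star}, which holds on all of $\mathcal{K}$.

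For $\mathbf{K}\neq\mathbf{K}^\star$, \eqref{eq:floor} together with $G_w\succ0$ gives $Q_H(\mathbf{K})>0$, so $\tau$ is well defined. By \eqref{eq:tau}, $2\langle\boldsymbol{\mathcal{R}}(\mathbf{K}),\mathbf{K}-\mathbf{K}^\star\rangle=\tau(\mathbf{K})\,Q_H(\mathbf{K})$. Substituting this and collecting the $Q_H$ terms yields \eqref{eq:transfer}.

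For the equivalence, the star-convexity inequality of Definition~\ref{def:star} at $\mathbf{K}$ (without modulus) reads $\langle\nabla\mathcal{J}(\mathbf{K}),\mathbf{K}-\mathbf{K}^\star\rangle\ge\mathcal{J}(\mathbf{K})-\mathcal{J}^\star$. By \eqref{eq:transfer}, this is equivalent to $(1-\tau(\mathbf{K}))Q_H(\mathbf{K})\ge0$. Since $Q_H(\mathbf{K})>0$, we may divide by it, and the condition becomes exactly $\tau(\mathbf{K})\le1$; both directions follow because the division is by a strictly positive number.

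The only step needing care is the strict positivity of $Q_H$ away from $\mathbf{K}^\star$, since it is used both for the definition of $\tau$ and for the "if and only if". It is supplied by the floor \eqref{eq:floor}. That floor in turn relies on $\Sigma_s^\star\succeq\Sigma_0\succ0$, $H_s\succeq R\succ0$, and $G_w\succ0$, all of which hold under the standing assumptions. There is no genuine obstacle beyond this bookkeeping.
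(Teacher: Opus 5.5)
Your proposal is correct and matches the paper's proof: substitute $\nabla\mathcal{J}=\nabla^{\!\star}\mathcal{J}-2\boldsymbol{\mathcal{R}}$ into the star identity \eqref{eq:star} and rewrite the remainder pairing as $\tau(\mathbf{K})\,Q_H(\mathbf{K})$ via \eqref{eq:tau}. You also spell out the ``if and only if'' through the strict positivity of $Q_H$ from \eqref{eq:floor}, a step the paper leaves implicit.
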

\begin{IEEEproof}
Substitute $\nabla\mathcal{J}=\nabla^{\!\star}\mathcal{J}
-2\boldsymbol{\mathcal{R}}$ in \eqref{eq:star} and
use \eqref{eq:tau}.
\end{IEEEproof}
\begin{figure}[!ht]
\centering
\includegraphics[width=3.45in]{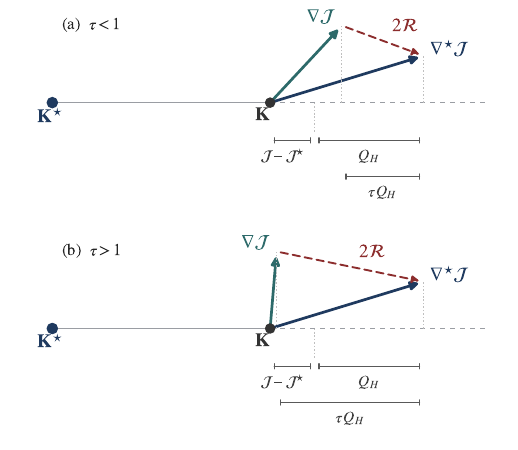}
\caption{The geometry of the identity \eqref{eq:star} and
decomposition \eqref{eq:transfer}. The starred field's
projection onto the ray from $\mathbf{K}^\star$ always equals
$\mathcal{J}-\mathcal{J}^\star+Q_H$; the mismatch shortens the
gradient's projection by $\tau Q_H$. (a) For $\tau<1$ the gradient
retains a projection exceeding $\mathcal{J}-\mathcal{J}^\star$.
(b) For $\tau>1$ the loss exceeds $Q_H$ and the star inequality of
Definition~\ref{def:star} becomes invalid.}
\label{fig:fields}
\end{figure}
Fig.~\ref{fig:fields} shows the two regimes of the decomposition. At
a spurious stationary point the left-hand side
of \eqref{eq:transfer} vanishes while
$\mathcal{J}>\mathcal{J}^\star$, forcing $\tau>1$. Under an
invertible linear reparametrization $\mathbf{K}=L\mathbf{K}'$ of the
vertex gains, with weights $w'=L^\top w$ so that the deployed gains
are unchanged, the quantities $\Delta_s$, $E_s$, $H_s$, $\Sigma_s$,
$\mathcal{J}$, and $Q_H$ are all unchanged, while both gradient
fields transform by $L^\top$ acting blockwise and the increment
$\mathbf{K}-\mathbf{K}^\star$ by $L^{-1}$; every pairing
in \eqref{eq:transfer} is therefore invariant, and so is $\tau$. The
floor constant is not: $G_w$ becomes $L^\top G_w L$, so
$\underline{\lambda}$ depends on the chosen representation.
The identity and the transfer formula are both relative to the
reference schedule: at a stationary point, \eqref{eq:transfer}
forces $\tau>1$ when the cost exceeds the reference value and
$\tau<1$ when it lies below. The guarantees of \S\ref{sec:rate}
accordingly certify convergence to the reference on its component;
that the reference is the global minimizer is a hypothesis of those
results, not a conclusion that the ratio implies.
\section{Convergence Guarantees}\label{sec:rate}
The guarantees of this section concern sublevel components of the
cost. For $c>\mathcal{J}^\star$, let $\mathcal{G}_c$ denote the
connected component of
$\{\mathbf{K}\in\mathcal{K}:\mathcal{J}(\mathbf{K})\le c\}$ containing
$\mathbf{K}^\star$. Each such component is compact: coercivity keeps
the sublevel set bounded and separated from the boundary of
$\mathcal{K}$ (Appendix~\ref{app:proofs}). Let the \emph{critical level}
$c^\dagger$ be the supremum of the levels $c$ for which $\mathcal{G}_c$
contains no stationary point of $\mathcal{J}$ other than
$\mathbf{K}^\star$. For $c<c^\dagger$ the optimum is the only
stationary point in $\mathcal{G}_c$; for $c>c^\dagger$ the component
may contain spurious stationary points, and convergence of descent
methods to $\mathbf{K}^\star$ from all of $\mathcal{G}_c$ is no longer
guaranteed.
\begin{assumption}\label{ass:tau}
There exist $c>\mathcal{J}^\star$ and $\tau_0<1$ such
that $\tau(\mathbf{K})\le\tau_0$ for every
$\mathbf{K}\in\mathcal{G}_c\setminus\{\mathbf{K}^\star\}$.
\end{assumption}
Assumption~\ref{ass:tau} bounds the contribution of the covariance
mismatch by a fixed fraction $\tau_0$ of the quadratic term
in \eqref{eq:transfer}, uniformly over $\mathcal{G}_c$. The hypothesis
can be checked: once the minimizer is available, $\tau$ is computable
at any feasible schedule by Lyapunov solves, and since $\tau>1$ at
every spurious stationary point, the assumption fails on any component
that contains one; in particular a $c$ satisfying the assumption lies
below the critical level. The reach of the hypothesis is captured by
one number: let
$c_\tau=\inf\{c\ge\mathcal{J}^\star:\sup_{\mathcal{G}_c}\tau\ge1\}$,
so that Assumption~\ref{ass:tau} is satisfiable exactly for
$c<c_\tau$, and $c_\tau\le c^\dagger$ in general. \S\ref{sec:numerics} probes the bound by direct maximization of
$\tau$ over sublevel regions, with the protocol in
Appendix~\ref{app:experiments}.
\begin{theorem}\label{thm:pl}
Let Assumption~\ref{ass:tau} hold. Then for every
$\mathbf{K}\in\mathcal{G}_c$,
\begin{equation}\label{eq:pl}
\lVert\nabla\mathcal{J}(\mathbf{K})\rVert^2 \ge
\mu\,\big(\mathcal{J}(\mathbf{K})-\mathcal{J}^\star\big),
\qquad
\mu = 4\,(1-\tau_0)\,\underline{\lambda}.
\end{equation}
\end{theorem}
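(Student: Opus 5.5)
The plan is to combine the transfer formula of Corollary~\ref{cor:transfer} with the floor \eqref{eq:floor}, and then close the argument with Cauchy--Schwarz. At $\mathbf{K}=\mathbf{K}^\star$ both sides of \eqref{eq:pl} vanish, so fix $\mathbf{K}\in\mathcal{G}_c\setminus\{\mathbf{K}^\star\}$ and abbreviate $\delta=\mathcal{J}(\mathbf{K})-\mathcal{J}^\star\ge0$ and $r=\lVert\mathbf{K}-\mathbf{K}^\star\rVert>0$.

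First, Assumption~\ref{ass:tau} gives $1-\tau(\mathbf{K})\ge1-\tau_0>0$. Since $Q_H(\mathbf{K})>0$, \eqref{eq:transfer} then yields
\[
\langle\nabla\mathcal{J}(\mathbf{K}),\mathbf{K}-\mathbf{K}^\star\rangle
\ge \delta+(1-\tau_0)\,Q_H(\mathbf{K})
\ge \delta+(1-\tau_0)\,\underline{\lambda}\,r^2 ,
\]
where the last step uses \eqref{eq:floor}. Write $a=(1-\tau_0)\underline{\lambda}>0$.

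Second, apply Cauchy--Schwarz in the trace inner product on tuples to get $\lVert\nabla\mathcal{J}(\mathbf{K})\rVert\,r\ge\delta+a r^2$. By AM--GM, $\delta+ar^2\ge 2\sqrt{a\delta}\,r$. Dividing by $r$ gives $\lVert\nabla\mathcal{J}(\mathbf{K})\rVert\ge2\sqrt{a\delta}$, and squaring gives $\lVert\nabla\mathcal{J}(\mathbf{K})\rVert^2\ge4a\delta=\mu\,\delta$, which is exactly the constant $\mu=4(1-\tau_0)\underline{\lambda}$ in \eqref{eq:pl}.

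There is no real obstacle here; the substantive work was done in Lemma~\ref{lem:star} and Corollary~\ref{cor:transfer}. The only points to verify are the following.
\begin{itemize}
\item The cost gap satisfies $\delta\ge0$ on $\mathcal{G}_c$, because $\mathbf{K}^\star$ is the global minimizer.
\item The bound on $\tau$ is used only through $1-\tau\ge1-\tau_0$, which requires $Q_H>0$. This holds by \eqref{eq:floor} under $G_w\succ0$.
\item Dividing by $r$ is legitimate because the case $\mathbf{K}=\mathbf{K}^\star$ was handled separately.
\end{itemize}

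A naive route would drop the $\delta$ term and obtain only $\lVert\nabla\mathcal{J}\rVert\ge ar$. That would then require an upper bound of $\delta$ by $r^2$, which depends on smoothness constants. The balancing step via AM--GM avoids this and yields the clean factor of $4$.
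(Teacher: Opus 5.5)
Your proof is correct and matches the paper's argument step for step. The paper also uses \eqref{eq:transfer} with Assumption~\ref{ass:tau} and the floor \eqref{eq:floor} to bound $\langle\nabla\mathcal{J}(\mathbf{K}),\mathbf{K}-\mathbf{K}^\star\rangle$ below, then applies Cauchy--Schwarz and AM--GM, divides by $\lVert\mathbf{K}-\mathbf{K}^\star\rVert$, squares, and handles $\mathbf{K}=\mathbf{K}^\star$ separately.
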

\begin{IEEEproof}
Fix $\mathbf{K}\neq\mathbf{K}^\star$ and abbreviate
$g=\mathcal{J}(\mathbf{K})-\mathcal{J}^\star$ and
$\mathbf{\Delta}=\mathbf{K}-\mathbf{K}^\star$.
By \eqref{eq:transfer}, Assumption~\ref{ass:tau}, and the
floor \eqref{eq:floor},
\[
\langle \nabla\mathcal{J}(\mathbf{K}),\,\mathbf{\Delta}\rangle
\ge g + (1-\tau_0)\,\underline{\lambda}\,\lVert\mathbf{\Delta}\rVert^2 .
\]
The Cauchy--Schwarz inequality bounds the left side by
$\lVert\nabla\mathcal{J}(\mathbf{K})\rVert\,\lVert\mathbf{\Delta}\rVert$,
and the arithmetic--geometric mean inequality bounds the right side
below by
$2\sqrt{(1-\tau_0)\,\underline{\lambda}\,g}\;\lVert\mathbf{\Delta}\rVert$.
Dividing by $\lVert\mathbf{\Delta}\rVert$ and squaring implies the claim;
at $\mathbf{K}=\mathbf{K}^\star$ both sides vanish.
\end{IEEEproof}
Inequality \eqref{eq:pl} is a gradient-dominance condition of
Polyak--{\L}ojasiewicz type \cite{karimi2016linear} on $\mathcal{G}_c$,
with a constant assembled from the certified fraction $\tau_0$ and the
spectral floor $\underline{\lambda}$. Such a condition yields linear
convergence of gradient descent provided the iterates remain in the
region where it holds; the next result verifies this and fixes an
admissible step size.
\begin{theorem}\label{thm:rate}
Let Assumption~\ref{ass:tau} hold and let
$\mathbf{K}^{(0)}\in\mathcal{G}_c$. There exists $\bar\eta>0$,
depending only on $\mathcal{G}_c$ and a compact neighborhood of it,
such that for every
$\eta\in(0,\bar\eta]$ the iteration
$\mathbf{K}^{(t+1)}=\mathbf{K}^{(t)}
-\eta\,\nabla\mathcal{J}(\mathbf{K}^{(t)})$
remains in $\mathcal{G}_c$ and satisfies,
\begin{equation}\label{eq:rate}
\mathcal{J}(\mathbf{K}^{(t)})-\mathcal{J}^\star \le
\Big(1-\tfrac{\eta\mu}{2}\Big)^{\!t}
\big(\mathcal{J}(\mathbf{K}^{(0)})-\mathcal{J}^\star\big),
\end{equation}
with $\mathbf{K}^{(t)}\to\mathbf{K}^\star$.
\end{theorem}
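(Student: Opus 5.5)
The plan is the standard descent-lemma argument under the Polyak--{\L}ojasiewicz inequality of Theorem~\ref{thm:pl}. The only delicate point is that $\mathcal{J}$ is smooth only locally, so the iterates must be shown never to leave the compact component $\mathcal{G}_c$.

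\emph{Choice of neighborhood.} Since $\mathcal{G}_c$ is compact and contained in the open set $\mathcal{K}$, we can pick $r>0$ such that the closed $r$-neighborhood $\mathcal{N}=\{\mathbf{K}:\operatorname{dist}(\mathbf{K},\mathcal{G}_c)\le r\}$ is a compact subset of $\mathcal{K}$. On $\mathcal{K}$ the cost $\mathcal{J}$ is real-analytic, because $P_s$ and $\Sigma_s$ are rational functions of the deployed gains via the Lyapunov equations. Hence on $\mathcal{N}$ the gradient is Lipschitz with some constant $L$ and bounded by some $M=\max_{\mathcal{G}_c}\lVert\nabla\mathcal{J}\rVert$. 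Set
\[
\bar\eta=\min\{1/L,\; r/M\},
\]
which depends only on $\mathcal{G}_c$ and $\mathcal{N}$, as the statement requires.

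\emph{One-step invariance and decrease.} Let $\mathbf{K}\in\mathcal{G}_c$ and $\eta\le\bar\eta$. The whole segment $\mathbf{K}-\theta\eta\nabla\mathcal{J}(\mathbf{K})$, $\theta\in[0,1]$, stays within distance $\eta M\le r$ of $\mathbf{K}$, so it lies in $\mathcal{N}\subset\mathcal{K}$. The descent lemma along this segment gives
\[
\mathcal{J}(\mathbf{K}-\theta\eta\nabla\mathcal{J})\le\mathcal{J}(\mathbf{K})-\theta\eta\bigl(1-\tfrac{\theta\eta L}{2}\bigr)\lVert\nabla\mathcal{J}(\mathbf{K})\rVert^2\le\mathcal{J}(\mathbf{K})\le c .
\]
So every point of the segment lies in the sublevel set. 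The segment is connected and contains $\mathbf{K}\in\mathcal{G}_c$, so it lies in the same connected component, namely $\mathcal{G}_c$. This connectivity step is the main obstacle: a naive argument only places the next iterate in the sublevel set, possibly in a different component where Assumption~\ref{ass:tau} gives nothing. Controlling the whole segment, not just its endpoint, is what rules that out.

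At $\theta=1$, with $\eta L\le1$, we get
\[
\mathcal{J}(\mathbf{K}^+)\le\mathcal{J}(\mathbf{K})-\tfrac{\eta}{2}\lVert\nabla\mathcal{J}(\mathbf{K})\rVert^2 .
\]
Theorem~\ref{thm:pl} then yields
\[
\mathcal{J}(\mathbf{K}^+)-\mathcal{J}^\star\le\bigl(1-\tfrac{\eta\mu}{2}\bigr)\bigl(\mathcal{J}(\mathbf{K})-\mathcal{J}^\star\bigr).
\]
Induction on $t$ gives both the invariance $\mathbf{K}^{(t)}\in\mathcal{G}_c$ and the rate \eqref{eq:rate}.

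\emph{Well-posedness of the rate.} The contraction factor lies in $[0,1)$. Indeed, $\mu\le L$: the PL inequality combined with the descent lemma, applied at a point near $\mathbf{K}^\star$, forces $\mu\le 2L$, or one can directly shrink $\bar\eta$ to guarantee $\eta\mu/2<1$.

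\emph{Convergence of the iterates.} By \eqref{eq:rate}, $\mathcal{J}(\mathbf{K}^{(t)})\to\mathcal{J}^\star$. The iterates lie in the compact set $\mathcal{G}_c$, so every limit point is a minimizer in $\mathcal{G}_c$. The only such point is $\mathbf{K}^\star$: any other minimizer $\mathbf{K}$ would satisfy $\mathcal{J}(\mathbf{K})=\mathcal{J}^\star$, and the floor \eqref{eq:hidden} together with Corollary~\ref{cor:transfer} and $\tau\le\tau_0<1$ gives
\[
0=\langle\nabla\mathcal{J}(\mathbf{K}),\mathbf{K}-\mathbf{K}^\star\rangle\ge(1-\tau_0)\,\underline{\lambda}\,\lVert\mathbf{K}-\mathbf{K}^\star\rVert^2 ,
\]
a contradiction. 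Hence $\mathbf{K}^{(t)}\to\mathbf{K}^\star$. Alternatively, the step lengths $\eta\lVert\nabla\mathcal{J}\rVert\le\sqrt{2\eta(\mathcal{J}(\mathbf{K}^{(t)})-\mathcal{J}^\star)}$ decay geometrically, so the sequence is Cauchy.
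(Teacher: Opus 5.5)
Your proof is correct. It shares the paper's skeleton: a Lipschitz gradient on a compact inflation of $\mathcal{G}_c$, the descent lemma with $\eta\le 1/L$, Theorem~\ref{thm:pl} for the contraction, and compactness for convergence of the iterates. Where it differs is in how the iterates are kept in $\mathcal{G}_c$.

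The paper introduces the distance $d$ from $\mathcal{G}_c$ to the other components of the sublevel set. It adds $d/(2G)$ to the step cap, so that a cost-nonincreasing step of length at most $d/2$ cannot jump to another component. You instead apply the descent inequality at every $\theta\in[0,1]$. This shows the whole segment between consecutive iterates has cost at most $c$, and connectedness then places it in $\mathcal{G}_c$.

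Your route needs no separation constant, which buys two things:
\begin{itemize}
\item You never need the positivity of $d$. The paper attributes it to compactness, though compactness alone does not stop components from accumulating on one another. It does hold here, e.g.\ by Proposition~\ref{prop:d}, but you also avoid that proposition's explicit bound, which requires $c^\dagger$ and a mountain-pass argument.
\item Your $\bar\eta=\min\{1/L,\,r/M\}$ depends only on $\mathcal{G}_c$ and its neighborhood, as the statement literally claims. The paper's $d$ depends on the rest of the sublevel set.
\end{itemize}
The paper's argument, in exchange, uses only monotonicity and a step-length bound. It therefore covers any monotone scheme with bounded steps, not only steps whose intermediate points are controlled by the quadratic upper bound.

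You also make explicit two points the paper only asserts. First, $\mathbf{K}^\star$ is the only minimizer in $\mathcal{G}_c$, which you obtain from \eqref{eq:transfer} with $\tau\le\tau_0<1$ and the floor \eqref{eq:floor}. Second, the contraction factor satisfies $1-\eta\mu/2\ge 0$. For this you state $\mu\le L$ but actually derive $\mu\le 2L$; the latter suffices since $\eta\le 1/L$.
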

\begin{IEEEproof}
The cost is analytic on $\mathcal{K}$, since the Lyapunov solutions
in \eqref{eq:Js} depend analytically on the gain (Appendix~\ref{app:proofs}); on a compact neighborhood of $\mathcal{G}_c$
contained in $\mathcal{K}$ its gradient is therefore Lipschitz with
some constant $L$. Now choose $\varepsilon>0$ such that every point within
distance $\varepsilon$ of $\mathcal{G}_c$ lies in this neighborhood.
Two other constants of the component control the iteration: $G$, the
largest gradient norm on $\mathcal{G}_c$, so that no step moves the
iterate farther than $\eta G$; and $d$, the distance from
$\mathcal{G}_c$ to the other components of the sublevel set, positive
as $\mathcal{G}_c$ is compact. Take
$\bar\eta=\min\{1/L,\,\varepsilon/G,\,d/(2G)\}$. Each step then moves
at most $\eta G\le\varepsilon$, so the segment between consecutive
iterates stays in the neighborhood where the descent lemma applies:
for $\eta\le 1/L$,
$\mathcal{J}(\mathbf{K}^{(t+1)})\le\mathcal{J}(\mathbf{K}^{(t)})
-\tfrac{\eta}{2}\lVert\nabla\mathcal{J}(\mathbf{K}^{(t)})\rVert^2$, so
the cost never increases and the iterates remain in the sublevel set;
since each step also moves at most $d/2$, the iterates cannot cross
into another component and remain in $\mathcal{G}_c$.
Inserting \eqref{eq:pl} into the descent inequality yields the
contraction \eqref{eq:rate}. The cost gap therefore converges to zero;
every accumulation point of the sequence, bounded because
$\mathcal{G}_c$ is compact, attains $\mathcal{J}^\star$; and
$\mathbf{K}^\star$ is the unique such point in $\mathcal{G}_c$, so the
sequence converges to it. Explicit expressions for $L$, $G$, and $d$ are derived in
Appendix~\ref{app:proofs}.
\end{IEEEproof}
\section{Numerical Studies}\label{sec:numerics}
Three plant families and one constructed example are examined, all
with uniform grid weights $\nu_s=1/S$. The first is a second-order
LPV family with affine
dependence on a single parameter $\rho\in[0,1]$, as in \eqref{eq:tp},
with vertex data
\begin{equation}\label{eq:sysA}
A_1=\begin{bmatrix}0.90 & 0.15\\ -0.10 & 0.85\end{bmatrix},\quad
A_2=\begin{bmatrix}1.05 & 0.35\\ -0.10 & 0.97\end{bmatrix},\quad
B_1=\begin{bmatrix}1.0\\ 0.2\end{bmatrix},\quad
B_2=\begin{bmatrix}0.7\\ 0.6\end{bmatrix},
\end{equation}
weighting functions $w(\rho)=(1-\rho,\;\rho)^\top$, $S=21$, and
$Q=\Sigma_0=I_2$, $R=1$; the vertex gains
$K_1,K_2\in\mathbb{R}^{1\times2}$ give four decision variables. The
family is open-loop unstable near $\rho=1$, with spectral radius
$1.03$ at that vertex.
The second is a Duffing oscillator with softening cubic stiffness
driven through a first-order actuator,
\begin{equation}\label{eq:duffing}
m\ddot{x} + c\dot{x} + k_0 x - k_1 x^3 = u_a, \qquad
\tau\dot{u}_a = u - u_a,
\end{equation}
with $m=k_0=k_1=1$, $c=0.4$, $\tau=0.2$, and states
$(x,\dot{x},u_a)$: position, velocity, and actuator state. On
$|x|\le1.5$ the substitution $\rho=x^2$ yields an exact quasi-LPV
representation with stiffness
$k_0-k_1\rho$ \cite{tanaka1998chaos}, two vertices with
$w(\rho)=(1-\rho/2.25,\;\rho/2.25)^\top$. Euler discretization with
step $0.05$, $S=21$, $Q=\Sigma_0=I_3$, $R=1$; six decision variables.
The frozen plants are open-loop unstable for $\rho>1$.
The third is the tensor-product model of an aeroelastic wing
section \cite{takarics2014wing}: fourteen states, one input, and four
vertex gains produced by the TP model transformation, on a grid of
$S=51$ points, with diagonal $Q$ weighting two states by $10^4$ and
the remainder by one, $R=1$, and $\Sigma_0=I_{14}$, for $56$ decision
variables. The floor constants are $\lambda_{\min}(G_w)=0.183$ for
the first two systems and $0.035$ for the wing. A fourth example,
constructed to contain a spurious local minimum, is introduced with
the experiment that uses it.
We first verify the identity \eqref{eq:star} numerically. For each
system the minimizer is located by gradient descent with backtracking
line search from a pointwise Riccati warm start; sample points are
then generated by drawing random directions from $\mathbf{K}^\star$
under fixed seeds and bisecting along each ray onto the level sets
between $c=1.1\,\mathcal{J}^\star$ and $8\,\mathcal{J}^\star$. At
every sample the two sides of \eqref{eq:star} are evaluated
independently. The maximum relative discrepancy is
$1.3\times10^{-14}$ over 240 samples for the second-order system,
$2.7\times10^{-14}$ over 240 samples for the Duffing system, and
$5.1\times10^{-13}$ over 144 samples for the wing, with median
discrepancies near $10^{-15}$: the identity holds at roundoff level.
Since \eqref{eq:star} is valid for any feasible reference schedule,
the check is insensitive to the accuracy with which the minimizer
itself is located.
\begin{table}[!ht]
\caption{Largest sampled mismatch ratio $\tau$ per level.}
\label{tab:tau}
\centering
\begin{tabular}{cccc}
\hline
$c/\mathcal{J}^\star$ & $\max\tau$, second-order & $\max\tau$, Duffing
& $\max\tau$, wing\\
\hline
$1.1$ & $0.520$ & $0.774$ & $0.330$\\
$1.5$ & $0.550$ & $0.858$ & $0.413$\\
$2.0$ & $0.487$ & $0.847$ & $-0.042$\\
$3.0$ & $0.220$ & $0.573$ & $-0.998$\\
$5.0$ & $-1.50$ & $0.182$ & $-7.71$\\
$8.0$ & $-4.95$ & $-1.26$ & $-15.4$\\
\hline
\end{tabular}
\end{table}
The mismatch ratio \eqref{eq:tau} is measured on the same sampled
components, under the same protocol.
Table~\ref{tab:tau} reports the largest sampled value of $\tau$ at
each level, over 240 level-set samples for each
of the first two systems and 144 for the wing (40 and 24 rays per
level, fixed seeds; radial shells probe the interior separately); the wing minimizer is located to gradient norm
$8\times10^{-3}$. On every sampled component the maximum lies below
one: $0.55$ for the second-order system, $0.86$ for the Duffing
system, and $0.41$ for the wing, attained near
$c=1.5\,\mathcal{J}^\star$ in all three cases. At the highest
levels the sampled maxima turn negative: far
from the minimizer the covariance mismatch enters \eqref{eq:transfer}
with favorable sign. A sampled maximum estimates the supremum of
\eqref{eq:tau} from below, so the ratio is also maximized directly:
projected gradient ascent on $\tau$ within $\{\mathcal{J}\le c\}$,
from multiple starts per level under fixed seeds
(protocol in Appendix~\ref{app:experiments}), with iterates and
radial shells covering the interior of the component rather than its
level sets alone. The attack raises the largest ratio to $0.79$ for
the second-order system and $0.995$ for the Duffing system, both at
interior points near $3\,\mathcal{J}^\star$, and crosses the
threshold on the wing: the largest ratio found grows from $0.98$ at
$c=1.1\,\mathcal{J}^\star$ to $1.02$ at $1.5\,\mathcal{J}^\star$
and $1.52$ at $5\,\mathcal{J}^\star$,
with a violating gain of cost $1.37\,\mathcal{J}^\star$ connected
to $\mathbf{K}^\star$ within its own sublevel set.
Assumption~\ref{ass:tau} therefore holds with a wide margin on the
first system, with almost no margin on the second, and fails beyond
$1.37\,\mathcal{J}^\star$ on the wing example.
\begin{figure}[!ht]
\centering
\includegraphics[width=3.45in]{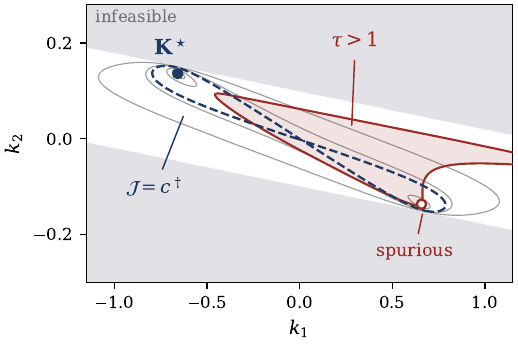}
\caption{The mirror example \eqref{eq:mirror} in the gain plane: cost
contours (gray), the critical-level contour $\mathcal{J}=c^\dagger$
(dashed), and the region $\tau>1$ (shaded), which contains the
spurious minimizer and its basin, reaches into the component of
$\mathbf{K}^\star$ below the critical level, and leaves a
neighborhood of $\mathbf{K}^\star$ clear.}
\label{fig:mirror}
\end{figure}
The fourth example reproduces the crossing in the plane, where every
quantity can be computed exactly. Two frozen plants share the state
matrix and differ only in the sign of the input matrix, namely,
\begin{equation}\label{eq:mirror}
A = \begin{bmatrix}0.9467 & -0.0018\\ 1.1218 & 0.7492\end{bmatrix},
\qquad
B_1 = -B_2 = \begin{bmatrix}-0.0610\\ -1.6287\end{bmatrix},
\end{equation}
with $\nu=(0.51,\,0.49)$, a single gain $K\in\mathbb{R}^{1\times2}$
deployed on both plants, and $Q=\Sigma_0=I_2$, $R=1$. For $S=N$ the
deployed gains decouple and no spurious minimum can arise, so a trap
needs more plants than vertices; whether one exists for a schedule
with $N\ge2$ is open (note that the interpolated family loses
controllability at $\rho=\tfrac12$, between the two grid points). The
gain
$K^0=[-0.6577,\ 0.1363]$ stabilizes both plants, as does its mirror
image $-K^0$. Gradient descent from $K^0$ converges to the global
minimum at $\mathcal{J}^\star=113.51$; descent from $-K^0$ converges
to a spurious local minimum at $\mathcal{J}=116.83$, located to
gradient norm $2\times10^{-5}$. The ratio measured at the spurious
point is $\tau=1.03451$, agreeing to six significant digits with the
value $1+(\mathcal{J}-\mathcal{J}^\star)/Q_H$
that \eqref{eq:transfer} forces at any stationary point other than
the minimizer; over 228 samples in a neighborhood of the spurious
minimizer, $\tau$ reaches $3.85$. Fig.~\ref{fig:mirror} shows the
region $\tau>1$ in the gain plane: it contains the spurious basin and
the saddle corridor, and it does not stop at the boundary of the
component of $\mathbf{K}^\star$. The level $c_\tau$ of
\S\ref{sec:rate} is computed exactly here. Newton's
method locates the ridge saddle at cost $c^\dagger=153.16$, and
minimizing $\mathcal{J}$ along the curve $\tau=1$ inside the
component gives $c_\tau=132.05$: Assumption~\ref{ass:tau} fails at
$47\%$ of the level range between $\mathcal{J}^\star$ and
$c^\dagger$, while spurious stationarity enters only at
$c^\dagger$. The hypothesis is
thus strictly stronger than the absence of spurious stationary
points, and it must be verified rather than inferred from the
topology. We leave open whether the fraction
$(c_\tau-\mathcal{J}^\star)/(c^\dagger-\mathcal{J}^\star)$, confined
to $[0,1]$ by the preceding inequality, admits a universal positive
lower bound; this example supplies the single data point $0.47$.
\begin{figure}[!ht]
\centering
\includegraphics[width=3.45in]{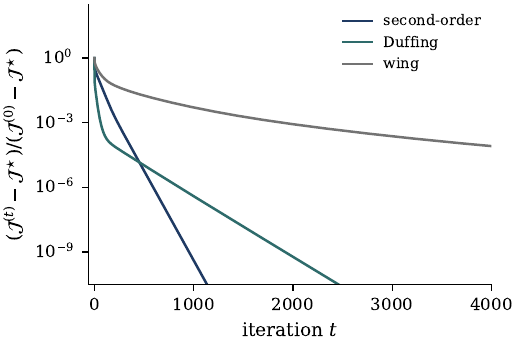}
\caption{Constant-step gradient descent on the three benchmarks.}
\label{fig:convergence}
\end{figure}
Theorem~\ref{thm:rate} is tested by running the
constant-step iteration on all three systems, initialized on the
$2\,\mathcal{J}^\star$ level set under fixed-seed rays, with the step
chosen as the largest power of two that never increases the cost:
$\eta=2^{-7}$, $2^{-9}$, and $1.5\times10^{-8}$, respectively.
Fig.~\ref{fig:convergence} shows the normalized gap: all three
trajectories become linear on semilogarithmic axes, with the wing's
shallow slope set by its Hessian spread of roughly 360 to one, which
enters the guarantee through the product $\eta\mu$; the wing's gap
has fallen to $8.2\times10^{-5}$ after $4000$ iterations. The wing
example converges although Assumption~\ref{ass:tau} fails on its
component: the hypothesis is sufficient for the guaranteed rate, not
necessary for convergence.
\section{Conclusion}\label{sec:conclusion}
This paper has identified an exact mechanism behind the reliability of
policy gradients for gain-scheduled LQR: when the gradient of the cost
is evaluated with the closed-loop covariances of the minimizer, the
scheduled cost satisfies a star-convexity inequality about the
minimizer on the entire feasible set. The ``alignment'' between this
hidden feature and a convergence guarantee of descent-type algorithms
is captured by a single dimensionless quantity, the mismatch ratio,
invariant to the choice of vertex basis, computable once the minimizer
is located, and greater than one at every spurious stationary point.
Bounding the ratio on a sublevel component yields gradient dominance
and a linear rate of convergence for first-order methods.
Two technical issues remain open: how to bound this ratio a priori, in
particular whether the certifiable fraction of the range below the
critical level admits a universal lower bound, and how to certify a
level below the critical one without knowing the minimizer.
\section*{Acknowledgment}
The research of the authors has been supported by the
2024-1.2.3-HU-RIZONT-2024-00030 project.
% =====================================================
\appendix
\section{Technical Proofs}\label{app:proofs}
\subsection{Coercivity and Existence of the Minimizer}\label{app:coercive}
\begin{proposition}\label{prop:coercive}
Under the standing assumptions of \S\ref{sec:formulation},
$\mathcal{J}$ is coercive on $\mathcal{K}$: along any sequence in
$\mathcal{K}$ that either is unbounded or converges to a point of
$\partial\mathcal{K}$, $\mathcal{J}\to\infty$. Consequently every
sublevel set of $\mathcal{J}$ is compact, and $\mathcal{J}$ attains
its minimum at some $\mathbf{K}^\star\in\mathcal{K}$.
\end{proposition}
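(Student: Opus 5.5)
We will prove the two divergence modes separately, each by bounding a single plant's cost $J_s$ from below. Since $\nu_s>0$ and every $J_s\ge0$, we have $\mathcal{J}\ge\nu_s J_s$ for each $s$. It therefore suffices, along a given sequence, to find one index $s$ whose cost diverges along a subsequence. Applying this to an arbitrary subsequence then gives divergence of the full sequence.

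We begin with the basic bounds. From $P_s=\sum_{t\ge0}(A_{\mathrm{cl}}^\top)^t(Q+K^\top RK)A_{\mathrm{cl}}^t$ we get $J_s(K)=\operatorname{tr}\big((Q+K^\top RK)\Sigma_s\big)$. Since $\Sigma_s\succeq\Sigma_0$, this yields
\[
J_s(K)\ \ge\ \lambda_{\min}(\Sigma_0)\big(\operatorname{tr}Q+\lambda_{\min}(R)\lVert K\rVert^2\big).
\]
Similarly, $J_s(K)=\operatorname{tr}(P_s\Sigma_0)\ge\lambda_{\min}(\Sigma_0)\operatorname{tr}(\Sigma_s)\,\lambda_{\min}(Q)$, using $\operatorname{tr}(P_s\Sigma_0)=\operatorname{tr}((Q+K^\top RK)\Sigma_s)\ge\lambda_{\min}(Q)\operatorname{tr}\Sigma_s$. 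Strictly, this last step only gives $J_s\ge\lambda_{\min}(Q)\operatorname{tr}\Sigma_s$, which is the bound actually needed.

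\emph{Unbounded sequences.} Suppose $\lVert\mathbf{K}^{(k)}\rVert\to\infty$. Using $G_w\succ0$ exactly as in the floor computation preceding \eqref{eq:floor},
\[
\sum_s\nu_s\lVert K_{\mathbf{K}^{(k)}}(\rho_s)\rVert^2\ \ge\ \lambda_{\min}(G_w)\lVert\mathbf{K}^{(k)}\rVert^2\to\infty.
\]
Combined with the first bound,
\[
\mathcal{J}\ \ge\ \lambda_{\min}(\Sigma_0)\lambda_{\min}(R)\sum_s\nu_s\lVert K_{\mathbf{K}^{(k)}}(\rho_s)\rVert^2\to\infty.
\]
This step is direct.

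\emph{Convergence to the boundary.} Suppose $\mathbf{K}^{(k)}\to\bar{\mathbf{K}}\in\partial\mathcal{K}$. Since $\mathcal{K}$ is an intersection of preimages of the open Schur set under the continuous maps $\mathbf{K}\mapsto A(\rho_s)-B(\rho_s)K_{\mathbf{K}}(\rho_s)$, and $\bar{\mathbf{K}}\notin\mathcal{K}$ (the set is open), some plant $s$ has closed-loop spectral radius $r_k:=\rho(A_{\mathrm{cl}}^{(k)})\to1$ by continuity of eigenvalues.

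This is the main obstacle: showing $\operatorname{tr}\Sigma_s^{(k)}\to\infty$. We plan to handle it with an eigenvector argument. Let $v_k$ be a unit left eigenvector, $v_k^*A_{\mathrm{cl}}^{(k)}=\lambda_kv_k^*$ with $|\lambda_k|=r_k$. Then
\[
v_k^*\Sigma_s v_k=\sum_t|\lambda_k|^{2t}\,v_k^*\Sigma_0v_k\ \ge\ \frac{\lambda_{\min}(\Sigma_0)}{1-r_k^2}\to\infty,
\]
so $\operatorname{tr}\Sigma_s\ge v_k^*\Sigma_sv_k$ diverges. Thus $\mathcal{J}\ge\nu_s\lambda_{\min}(Q)\operatorname{tr}\Sigma_s\to\infty$. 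Passing to subsequences handles the index $s$ possibly varying with $k$.

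\emph{Consequences.} We conclude in three steps.
\begin{enumerate}
\item Fix $c$ such that $\{\mathcal{J}\le c\}$ is nonempty; this is possible since $\mathcal{K}\neq\emptyset$.
\item The set $\{\mathcal{J}\le c\}$ is bounded by the first case. It is also closed in $\mathbb{R}^{m\times nN}$: a limit point of the set lying outside $\mathcal{K}$ would be a boundary point, and the second case would force $\mathcal{J}\to\infty$ along the approximating sequence, a contradiction. Inside $\mathcal{K}$, closedness follows from continuity of $\mathcal{J}$, since the Lyapunov solutions depend continuously on the gain.
\item Hence the sublevel set is compact, and the continuous function $\mathcal{J}$ attains its minimum on it at some $\mathbf{K}^\star\in\mathcal{K}$; this is a global minimizer.
\end{enumerate}
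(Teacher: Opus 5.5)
Your proof is correct, and its architecture is the paper's: a pointwise bound plus the Gram floor for unbounded sequences, blow-up at $\partial\mathcal{K}$, then closed and bounded sublevel sets and Weierstrass. The unbounded case is identical to the paper's.

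The boundary step, however, rests on a different key argument. The paper argues by contradiction on the value matrices. If $J_s$ stayed bounded, then $P_j\succeq Q$ would be bounded, and a subsequence would converge to some $\bar P\succ0$ with $\bar P-A_\partial^\top\bar PA_\partial=Q+K_\partial^\top RK_\partial\succ0$. The Lyapunov theorem would then make the limiting closed loop Schur, a contradiction. You instead work with the covariance and test it against a left eigenvector of maximal modulus. This gives the explicit inequality $J_s\ge\lambda_{\min}(Q)\lambda_{\min}(\Sigma_0)/(1-r_s^2)$, where $r_s<1$ is the closed-loop spectral radius, so divergence as $r_s\to1$ is immediate.

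The paper's route is qualitative: a compactness extraction plus a stability theorem applied at the limit. Yours avoids both. As a by-product it yields a computable spectral-radius margin on every sublevel set, $r_s^2\le1-\nu_s\lambda_{\min}(Q)\lambda_{\min}(\Sigma_0)/c$ on $\{\mathcal{J}\le c\}$. This is in the spirit of the explicit constants the paper develops in its later appendices.

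Two cosmetic points. First, delete the initial version of your second bound, the one with the extra factor $\lambda_{\min}(\Sigma_0)$. It is false in general (a stable scalar plant with $K=0$ and $\Sigma_0=2$ violates it), and only $J_s\ge\lambda_{\min}(Q)\operatorname{tr}\Sigma_s$ is used. Second, in the boundary case the unstable index $s$ is fixed by the limit point $\bar{\mathbf{K}}$, so no subsequence over $s$ is needed.
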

\begin{IEEEproof}
First, a pointwise lower bound, as in the single-plant
analysis~\cite{fazel2018global}. For any gain $K$ stabilizing plant
$s$, the Lyapunov solution satisfies $P\succeq Q+K^\top RK$, since
$P=\sum_{t\ge0}(A_{\mathrm{cl}}^t)^\top(Q+K^\top RK)
A_{\mathrm{cl}}^t$ and the $t=0$ term alone gives the bound; hence
\[
J_s(K)\;\ge\;\lambda_{\min}(\Sigma_0)
\big(\operatorname{tr}Q+\lambda_{\min}(R)\,\lVert K\rVert^2\big).
\]
Summing with weights $\nu_s$ and applying the Gram bound of
\S\ref{sec:identity} to
$\sum_s\nu_s\lVert K_{\mathbf{K}}(\rho_s)\rVert^2$ gives
\[
\mathcal{J}(\mathbf{K})\;\ge\;
\lambda_{\min}(\Sigma_0)\,\lambda_{\min}(R)\,
\lambda_{\min}(G_w)\,\lVert\mathbf{K}\rVert^2
+ \lambda_{\min}(\Sigma_0)\operatorname{tr}Q,
\]
so $\mathcal{J}\to\infty$ along any unbounded sequence. This is the
one place where coercivity uses $G_w\succ0$, and the condition is not
removable: $\mathcal{J}$ is constant along every direction in the
kernel of $G_w$.
Second, blow-up at the boundary. Let
$\mathbf{K}_j\to\mathbf{K}_\partial\in\partial\mathcal{K}$, so
that for some $s$ the limiting closed loop $A_\partial$ has spectral
radius at least one. Suppose, for contradiction, that $J_s$ stays
bounded along the sequence. Then $\operatorname{tr}(P_j\Sigma_0)$ is
bounded, so the matrices $P_j\succeq Q\succ0$ are bounded and a
subsequence converges to some $\bar P\succ0$. Passing to the limit
in the Lyapunov equation gives
$\bar P=A_\partial^\top\bar P A_\partial+Q
+K_\partial^\top RK_\partial$ with $\bar P\succ0$ and
$Q+K_\partial^\top RK_\partial\succ0$, which by the Lyapunov
stability theorem forces $A_\partial$ to be Schur, a contradiction.
Hence $J_s\to\infty$, and so does $\mathcal{J}$.
Coercivity bounds every sublevel set and excludes its limit points on
$\partial\mathcal{K}$, so each is closed in the ambient space and
therefore compact; in particular each lies at positive distance from
$\partial\mathcal{K}$, the form used in \S\ref{sec:rate}.
Continuity of $\mathcal{J}$ on the nonempty set $\mathcal{K}$ and
the Weierstrass theorem then give the minimizer.
\end{IEEEproof}
\subsection{Analyticity and the Descent Inequality}\label{app:analytic}
\begin{proposition}\label{prop:analytic}
(i) On $\mathcal{K}$, the maps $\mathbf{K}\mapsto P_s$ and
$\mathbf{K}\mapsto\Sigma_s$ are real-analytic for every $s$, and so
is $\mathcal{J}$. (ii) Let $C\subset\mathcal{K}$ be compact and let
$0<\varepsilon<\operatorname{dist}(C,\partial\mathcal{K})$. Then
the closed $\varepsilon$-inflation $N$ of $C$ is a compact subset of
$\mathcal{K}$, the constant
$L:=\sup_{N}\lVert\nabla^2\mathcal{J}\rVert_2$ is finite, and for
every $\mathbf{K}\in C$ and every $\mathbf{K}'$ with
$\lVert\mathbf{K}'-\mathbf{K}\rVert\le\varepsilon$,
\[
\mathcal{J}(\mathbf{K}')\;\le\;\mathcal{J}(\mathbf{K})
+\big\langle\nabla\mathcal{J}(\mathbf{K}),\,
\mathbf{K}'-\mathbf{K}\big\rangle
+\tfrac{L}{2}\,\lVert\mathbf{K}'-\mathbf{K}\rVert^2 .
\]
\end{proposition}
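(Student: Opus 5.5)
For part (i), write each Lyapunov equation in vectorized form. For the deployed gain $K=K_{\mathbf{K}}(\rho_s)$, which is affine (hence analytic) in $\mathbf{K}$, the closed loop $A_{\mathrm{cl}}(\mathbf{K})=A(\rho_s)-B(\rho_s)K_{\mathbf{K}}(\rho_s)$ is affine in $\mathbf{K}$. The equations then read
\[
\big(I-A_{\mathrm{cl}}^\top\otimes A_{\mathrm{cl}}^\top\big)\operatorname{vec}P_s=\operatorname{vec}(Q+K^\top RK),\qquad
\big(I-A_{\mathrm{cl}}\otimes A_{\mathrm{cl}}\big)\operatorname{vec}\Sigma_s=\operatorname{vec}\Sigma_0 .
\]
On $\mathcal{K}$ every $A_{\mathrm{cl}}$ is Schur. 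The eigenvalues of $A_{\mathrm{cl}}\otimes A_{\mathrm{cl}}$ are the products $\lambda_i\lambda_j$, all of modulus less than one, so both coefficient matrices are invertible. Their entries are polynomial in $\mathbf{K}$, and by Cramer's rule the solutions are rational functions of $\mathbf{K}$ whose denominator does not vanish on $\mathcal{K}$. Hence $P_s$ and $\Sigma_s$ are real-analytic on the open set $\mathcal{K}$. Then $\mathcal{J}=\sum_s\nu_s\operatorname{tr}(P_s\Sigma_0)$ is a finite linear combination of analytic functions, so it is analytic as well.

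For part (ii), first show that $N$ is a compact subset of $\mathcal{K}$. $N=\{\mathbf{K}':\operatorname{dist}(\mathbf{K}',C)\le\varepsilon\}$ is closed, and it is bounded because $C$ is bounded, so it is compact. Every point of $N$ lies within $\varepsilon$ of $C$. Since $\varepsilon<\operatorname{dist}(C,\partial\mathcal{K})$, the closed ball of radius $\varepsilon$ around each point of $C$ cannot meet $\partial\mathcal{K}$. That ball is connected and contains a point of the open set $\mathcal{K}$, so it lies entirely in $\mathcal{K}$, and therefore $N\subset\mathcal{K}$. By (i), $\nabla^2\mathcal{J}$ is continuous on $\mathcal{K}$, and so it is bounded on the compact set $N$; this makes $L$ finite.

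The descent inequality is then a standard Taylor argument. Fix $\mathbf{K}\in C$ and $\mathbf{K}'$ with $\lVert\mathbf{K}'-\mathbf{K}\rVert\le\varepsilon$. The whole segment $\mathbf{K}+t(\mathbf{K}'-\mathbf{K})$, $t\in[0,1]$, lies in the closed $\varepsilon$-ball around $\mathbf{K}$, hence in $N$. Set $\phi(t)=\mathcal{J}(\mathbf{K}+t(\mathbf{K}'-\mathbf{K}))$; it is $C^2$ on $[0,1]$. Taylor's theorem with integral remainder gives
\[
\phi(1)=\phi(0)+\phi'(0)+\int_0^1(1-t)\,\phi''(t)\,dt .
\]
Here $\phi'(0)=\langle\nabla\mathcal{J}(\mathbf{K}),\mathbf{K}'-\mathbf{K}\rangle$. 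The second derivative satisfies $\phi''(t)\le L\lVert\mathbf{K}'-\mathbf{K}\rVert^2$, where the Hessian is taken as a symmetric operator on the Frobenius inner-product space of tuples and $\lVert\cdot\rVert_2$ is its operator norm. Since $\int_0^1(1-t)\,dt=\tfrac12$, the claimed inequality follows.

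I expect the only delicate point to be the geometric one: the segment must remain inside $\mathcal{K}$. Because $\mathcal{K}$ need not be convex, this is exactly why the argument uses the $\varepsilon$-margin from $\partial\mathcal{K}$ together with convexity of balls, rather than convexity of $\mathcal{K}$. Everything else is routine.
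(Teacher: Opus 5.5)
Your proposal is correct and follows the paper's proof essentially step for step. For analyticity you use the vectorized Lyapunov equations with the nonvanishing determinant $\prod_{i,j}(1-\lambda_i\lambda_j)$; then compactness of the $\varepsilon$-inflation, boundedness of the continuous Hessian on it, and Taylor's theorem with integral remainder along a segment inside an $\varepsilon$-ball. Your connectedness argument (a ball that meets $\mathcal{K}$ but not $\partial\mathcal{K}$ lies in $\mathcal{K}$) usefully spells out the step that the paper compresses into ``at positive distance from $\partial\mathcal{K}$, so $N$ is a compact subset of $\mathcal{K}$.''
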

\begin{IEEEproof}
For (i), vectorizing the Lyapunov equations gives the explicit
formulas
$\operatorname{vec}P=(I-A_{\mathrm{cl}}^\top\!\otimes
A_{\mathrm{cl}}^\top)^{-1}\operatorname{vec}(Q+K^\top RK)$ and
$\operatorname{vec}\Sigma=(I-A_{\mathrm{cl}}\otimes
A_{\mathrm{cl}})^{-1}\operatorname{vec}\Sigma_0$. On the
stabilizing set the determinant
$\det(I-A_{\mathrm{cl}}\otimes A_{\mathrm{cl}})
=\prod_{i,j}(1-\lambda_i\lambda_j)$~\cite{horn1991topics}, where
the $\lambda_i$ are the eigenvalues of $A_{\mathrm{cl}}$, does not
vanish, since every
$|\lambda_i|<1$; each entry of $P$ and $\Sigma$ is therefore a
rational function of the entries of $K$ with nonvanishing
denominator, hence real-analytic. The cost
$J_s=\operatorname{tr}(P\Sigma_0)$ inherits analyticity, and
$\mathcal{J}$ is a finite weighted sum of analytic functions composed
with the linear maps $\mathbf{K}\mapsto K_{\mathbf{K}}(\rho_s)$.
For (ii), every point of $N$ lies within $\varepsilon$ of $C$ and
therefore at positive distance from $\partial\mathcal{K}$, so $N$ is
a compact subset of $\mathcal{K}$ on which the continuous
$\nabla^2\mathcal{J}$ is bounded, making $L$ finite. The segment
from $\mathbf{K}$ to $\mathbf{K}'$ lies in the $\varepsilon$-ball
around $\mathbf{K}\in C$, hence in $N$, and Taylor's theorem with
integral remainder along this segment gives the inequality.
\end{IEEEproof}
\subsection{Sublevel Bounds and the Gradient Constant}\label{app:levels}
\begin{lemma}[level bounds]\label{lem:levels}
Let $\ell>0$ and let $\mathbf{K}$ satisfy
$\mathcal{J}(\mathbf{K})\le\ell$. Then for every $s$, with all
quantities at the deployed gain,
\[
\operatorname{tr}P_s\le\frac{\ell}{\nu_s\,\lambda_{\min}(\Sigma_0)},
\qquad
\operatorname{tr}\Sigma_s\le\frac{\ell}{\nu_s\,\lambda_{\min}(Q)},
\qquad
\lVert K_{\mathbf{K}}(\rho_s)\rVert^2\le
\frac{\ell}{\nu_s\,\lambda_{\min}(\Sigma_0)\,\lambda_{\min}(R)} .
\]
\end{lemma}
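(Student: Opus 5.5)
Since each summand of $\mathcal{J}$ is nonnegative, $\mathcal{J}(\mathbf{K})\le\ell$ implies $\nu_s J_s(K_{\mathbf{K}}(\rho_s))\le\ell$, hence $J_s\le\ell/\nu_s$ for every $s$. The three bounds then follow from lower-bounding $J_s$ by each of the three quantities. We use two representations of the same cost, both available at the deployed gain.

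For $P_s$, we use $J_s=\operatorname{tr}(P_s\Sigma_0)$ with $P_s\succeq0$. Since $\operatorname{tr}(P_s\Sigma_0)\ge\lambda_{\min}(\Sigma_0)\operatorname{tr}P_s$, we get $\operatorname{tr}P_s\le\ell/(\nu_s\lambda_{\min}(\Sigma_0))$.

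For the gain, we reuse the bound $P_s\succeq Q+K^\top RK$ from the proof of Proposition~\ref{prop:coercive}. This gives $J_s\ge\lambda_{\min}(\Sigma_0)\operatorname{tr}(K^\top RK)\ge\lambda_{\min}(\Sigma_0)\lambda_{\min}(R)\lVert K\rVert^2$, after dropping the nonnegative $\operatorname{tr}(Q\Sigma_0)$ term. The third inequality follows.

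For $\Sigma_s$, the one step needing care is the dual expression $J_s=\operatorname{tr}\big((Q+K^\top RK)\Sigma_s\big)$. We justify it by expanding $P=\sum_t(A_{\mathrm{cl}}^t)^\top(Q+K^\top RK)A_{\mathrm{cl}}^t$ and $\Sigma=\sum_t A_{\mathrm{cl}}^t\Sigma_0(A_{\mathrm{cl}}^t)^\top$, both convergent by Schur stability, and using cyclicity of the trace term by term. Equivalently, one can pair the two Lyapunov equations. Then $J_s\ge\operatorname{tr}(Q\Sigma_s)\ge\lambda_{\min}(Q)\operatorname{tr}\Sigma_s$, using $\Sigma_s\succeq0$ and $K^\top RK\succeq0$, which gives the second bound.
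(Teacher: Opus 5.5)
Your proof is correct and follows essentially the same route as the paper's. The paper also starts from $\nu_s J_s\le\mathcal{J}\le\ell$ and then uses $J_s=\operatorname{tr}(P_s\Sigma_0)$ for the first bound, the dual trace form $\operatorname{tr}\big((Q+K^\top RK)\Sigma_s\big)$ for the second, and $P_s\succeq Q+K^\top RK$ for the third; your series-expansion justification of the dual form supplies a step the paper only asserts.
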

\begin{IEEEproof}
Since the pointwise costs are nonnegative,
$\nu_s J_s\le\mathcal{J}\le\ell$. The first bound follows from
$J_s=\operatorname{tr}(P_s\Sigma_0)\ge
\lambda_{\min}(\Sigma_0)\operatorname{tr}P_s$; the second from the
alternative trace form
$J_s=\operatorname{tr}\big((Q+K^\top RK)\Sigma_s\big)\ge
\lambda_{\min}(Q)\operatorname{tr}\Sigma_s$; the third from
$P_s\succeq Q+K^\top RK$ as in
Proposition~\ref{prop:coercive}.
\end{IEEEproof}
\begin{corollary}[gradient bound]\label{cor:gradbound}
Write $p_s(\ell)$ and $\sigma_s(\ell)$ for the first two bounds of
Lemma~\ref{lem:levels}, set
$k_s(\ell)=\big(\ell/(\nu_s\lambda_{\min}(\Sigma_0)
\lambda_{\min}(R))\big)^{1/2}$, and define
$h_s(\ell)=\lVert R\rVert_2
+\lVert B(\rho_s)\rVert_2^2\,p_s(\ell)$ and
$e_s(\ell)=h_s(\ell)\,k_s(\ell)
+\lVert B(\rho_s)\rVert_2\lVert A(\rho_s)\rVert_2\,p_s(\ell)$.
Then on $\{\mathcal{J}\le\ell\}$,
\[
\lVert\nabla\mathcal{J}(\mathbf{K})\rVert\;\le\;
G(\ell):=2\sum_{s=1}^{S}\nu_s\,e_s(\ell)\,\sigma_s(\ell).
\]
\end{corollary}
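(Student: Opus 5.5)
The plan is to bound the gradient \eqref{eq:grad} term by term. The triangle inequality over vertex blocks and grid points reduces everything to bounding $\lVert E_s\Sigma_s\rVert$ for each $s$.

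\emph{Step 1: reduce the tuple norm.} The tuple has blocks $\nabla_{K_i}\mathcal{J}=2\sum_s\nu_s w_i(\rho_s)E_s\Sigma_s$. The Frobenius norm of a tuple is at most the sum of its block norms, so
\[
\lVert\nabla\mathcal{J}\rVert\le\sum_i\Big\lVert 2\sum_s\nu_s w_i(\rho_s)E_s\Sigma_s\Big\rVert\le 2\sum_s\nu_s\Big(\sum_i w_i(\rho_s)\Big)\lVert E_s\Sigma_s\rVert .
\]
Since $w_i\ge0$ and $\sum_i w_i(\rho_s)=1$, the inner sum equals one. This yields $\lVert\nabla\mathcal{J}\rVert\le 2\sum_s\nu_s\lVert E_s\Sigma_s\rVert$.

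\emph{Step 2: bound each factor using Lemma~\ref{lem:levels}.} We use $\lVert E_s\Sigma_s\rVert\le\lVert E_s\rVert\,\lVert\Sigma_s\rVert_2$.
\begin{itemize}
\item Because $\Sigma_s\succeq0$, we have $\lVert\Sigma_s\rVert_2\le\operatorname{tr}\Sigma_s\le\sigma_s(\ell)$.
\item For the residual $E_s=H_sK-B(\rho_s)^\top P_sA(\rho_s)$, with $K$ the deployed gain, the triangle inequality and submultiplicativity give
\[
\lVert E_s\rVert\le\lVert H_s\rVert_2\lVert K\rVert+\lVert B(\rho_s)\rVert_2\lVert P_s\rVert\,\lVert A(\rho_s)\rVert_2 .
\]
Here the mixed inequality $\lVert XY\rVert\le\lVert X\rVert_2\lVert Y\rVert$ handles each Frobenius factor.
\item Since $P_s\succeq0$, both $\lVert P_s\rVert$ and $\lVert P_s\rVert_2$ are at most $\operatorname{tr}P_s\le p_s(\ell)$.
\item Consequently $\lVert H_s\rVert_2\le\lVert R\rVert_2+\lVert B(\rho_s)\rVert_2^2\,p_s(\ell)=h_s(\ell)$.
\item The third bound of Lemma~\ref{lem:levels} gives $\lVert K\rVert\le k_s(\ell)$.
\end{itemize}
Together these give $\lVert E_s\rVert\le e_s(\ell)$. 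Substituting into Step 1 yields $G(\ell)$.

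The only delicate point is consistent norm bookkeeping. Spectral norms must go on the system matrices, $H_s$ and $\Sigma_s$, and Frobenius norms on $K$ and $P_s$. Each positive semidefinite matrix is then controlled by its trace, so all bounds land on the quantities of Lemma~\ref{lem:levels}. Nothing else is an obstacle: no stability margin enters, since $\mathcal{J}\le\ell$ already controls $P_s$ and $\Sigma_s$.
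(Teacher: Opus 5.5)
Your proof is correct and follows the paper's argument: you bound $\lVert H_s\rVert_2$, $\lVert E_s\rVert$, and $\lVert\Sigma_s\rVert_2$ by the trace quantities of Lemma~\ref{lem:levels} and then sum over grid points with the triangle inequality. Two differences are cosmetic. The paper bounds each grid point's tuple contribution via $\lVert w(\rho_s)\rVert_2\le\sum_i w_i(\rho_s)=1$, whereas you first sum the block norms and then swap the order of summation. Your use of $\lVert P_s\rVert\le\operatorname{tr}P_s$ for the Frobenius norm in the $B(\rho_s)^\top P_sA(\rho_s)$ term makes explicit a step the paper states only with the spectral norm.
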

\begin{IEEEproof}
$\lVert H_s\rVert_2\le h_s$ and $\lVert E_s\rVert\le e_s$ follow
from the definitions of $H_s$ and $E_s$ with
$\lVert P_s\rVert_2\le\operatorname{tr}P_s$. In the gradient
tuple \eqref{eq:grad}, each grid point contributes at most
$2\nu_s\lVert w(\rho_s)\rVert_2\lVert E_s\Sigma_s\rVert
\le2\nu_s e_s\sigma_s$ to the norm, using
$\lVert w(\rho)\rVert_2\le\sum_i w_i(\rho)=1$ and
$\lVert\Sigma_s\rVert_2\le\operatorname{tr}\Sigma_s$; the
triangle inequality over $s$ gives the claim.
\end{IEEEproof}
Every quantity on the right of Lemma~\ref{lem:levels} and
Corollary~\ref{cor:gradbound} is computable from the data
$(A_s,B_s,Q,R,\Sigma_0,\nu,\ell)$. The constant $G$ of
Theorem~\ref{thm:rate} is $G(c)$.
\subsection{Perturbation Lemmas}\label{app:perturb}
Fix a plant; the index $s$ is suppressed. For $\zeta>0$, consider
gains $K$ with $A_{\mathrm{cl}}$ Schur and $J(K)\le\zeta$, and set
\[
p(\zeta)=\tfrac{\zeta}{\lambda_{\min}(\Sigma_0)},\quad
\sigma(\zeta)=\tfrac{\zeta}{\lambda_{\min}(Q)},\quad
k(\zeta)=\Big(\tfrac{\zeta}{\lambda_{\min}(\Sigma_0)
\lambda_{\min}(R)}\Big)^{1/2},
\]
\[
a(\zeta)=\lVert A\rVert_2+\lVert B\rVert_2\,k(\zeta),\quad
h(\zeta)=\lVert R\rVert_2+\lVert B\rVert_2^2\,p(\zeta),\quad
g(\zeta)=\tfrac{\zeta}{\lambda_{\min}(\Sigma_0)
\lambda_{\min}(Q)}.
\]
\begin{lemma}[perturbation of the Lyapunov solutions]\label{lem:perturb}
Let $K$ and $K'$ both satisfy the conditions above. Then
\[
\lVert\Sigma(K')-\Sigma(K)\rVert_2\le
c_\Sigma(\zeta)\,\lVert K'-K\rVert,
\qquad
c_\Sigma(\zeta)=2\,g(\zeta)\,\lVert B\rVert_2\,
a(\zeta)\,\sigma(\zeta),
\]
\[
\lVert P(K')-P(K)\rVert_2\le c_P(\zeta)\,\lVert K'-K\rVert,
\qquad
c_P(\zeta)=2\,g(\zeta)\big(a(\zeta)\lVert B\rVert_2\,p(\zeta)
+\lVert R\rVert_2\,k(\zeta)\big).
\]
\end{lemma}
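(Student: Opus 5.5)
The plan is to write each difference of Lyapunov solutions as the solution of a Lyapunov equation driven by the perturbed closed loop, and then bound the resulting series operator through the covariance level bounds of Lemma~\ref{lem:levels} in their single-plant form. Write $A_{\mathrm{cl}}=A-BK$, $A_{\mathrm{cl}}'=A-BK'$ and $\Delta=K'-K$, so that $A_{\mathrm{cl}}'-A_{\mathrm{cl}}=-B\Delta$. Let $\mathcal{T}'(X)=\sum_{t\ge0}A_{\mathrm{cl}}'^{\,t}X(A_{\mathrm{cl}}'^{\,t})^\top$ denote the series operator of the perturbed loop, and $\mathcal{S}'(Y)=\sum_{t\ge0}(A_{\mathrm{cl}}'^{\,t})^\top YA_{\mathrm{cl}}'^{\,t}$ its adjoint. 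Both series converge because $A_{\mathrm{cl}}'$ is Schur.

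First I bound the operator norms. Both operators are monotone on symmetric matrices, so for symmetric $X$ we have $-\lVert X\rVert_2\mathcal{T}'(I)\preceq\mathcal{T}'(X)\preceq\lVert X\rVert_2\mathcal{T}'(I)$, and hence $\lVert\mathcal{T}'(X)\rVert_2\le\lVert X\rVert_2\lVert\mathcal{T}'(I)\rVert_2$. Since $I\preceq\Sigma_0/\lambda_{\min}(\Sigma_0)$, it follows that $\mathcal{T}'(I)\preceq\Sigma(K')/\lambda_{\min}(\Sigma_0)$. The bound $\operatorname{tr}\Sigma\le\zeta/\lambda_{\min}(Q)$ (second bound of Lemma~\ref{lem:levels} with $\nu_s=1$) then gives
\[
\lVert\mathcal{T}'(I)\rVert_2\le\operatorname{tr}\mathcal{T}'(I)\le\frac{\sigma(\zeta)}{\lambda_{\min}(\Sigma_0)}=g(\zeta).
\]
For the adjoint, $\lVert\mathcal{S}'(I)\rVert_2\le\operatorname{tr}\mathcal{S}'(I)=\operatorname{tr}\mathcal{T}'(I)\le g(\zeta)$ by cyclicity of the trace. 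The same monotonicity argument therefore gives $\lVert\mathcal{S}'(Y)\rVert_2\le g(\zeta)\lVert Y\rVert_2$ for symmetric $Y$.

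Second, I derive the difference equations, and the key choice here is the splitting of the forcing term. Subtracting the two covariance equations gives
\[
\Sigma'-\Sigma=A_{\mathrm{cl}}'(\Sigma'-\Sigma)A_{\mathrm{cl}}'^{\top}+F_\Sigma,\qquad
F_\Sigma=A_{\mathrm{cl}}'\Sigma A_{\mathrm{cl}}'^{\top}-A_{\mathrm{cl}}\Sigma A_{\mathrm{cl}}^\top .
\]
The expected obstacle is that expanding $F_\Sigma$ naively produces a term quadratic in $\Delta$, which would spoil a linear Lipschitz bound. To avoid it I telescope,
\[
F_\Sigma=(A_{\mathrm{cl}}'-A_{\mathrm{cl}})\Sigma A_{\mathrm{cl}}'^{\top}+A_{\mathrm{cl}}\Sigma(A_{\mathrm{cl}}'-A_{\mathrm{cl}})^\top
=-B\Delta\Sigma A_{\mathrm{cl}}'^{\top}-A_{\mathrm{cl}}\Sigma\Delta^\top B^\top .
\]
This expression is exactly linear in $\Delta$ and is symmetric, since it equals a difference of symmetric matrices. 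Applying the third bound of Lemma~\ref{lem:levels} to both gains gives $\lVert A_{\mathrm{cl}}\rVert_2,\lVert A_{\mathrm{cl}}'\rVert_2\le\lVert A\rVert_2+\lVert B\rVert_2k(\zeta)=a(\zeta)$. Using also $\lVert\Sigma\rVert_2\le\sigma(\zeta)$ and $\lVert\Delta\rVert_2\le\lVert\Delta\rVert$, we get $\lVert F_\Sigma\rVert_2\le 2\lVert B\rVert_2a(\zeta)\sigma(\zeta)\lVert\Delta\rVert$. Then $\Sigma'-\Sigma=\mathcal{T}'(F_\Sigma)$ together with the operator bound yields $c_\Sigma(\zeta)$.

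Third, the value matrix is handled the same way. The difference satisfies $P'-P=A_{\mathrm{cl}}'^{\top}(P'-P)A_{\mathrm{cl}}'+F_P$, and I split the forcing symmetrically:
\[
F_P=(A_{\mathrm{cl}}'-A_{\mathrm{cl}})^\top PA_{\mathrm{cl}}'+A_{\mathrm{cl}}^\top P(A_{\mathrm{cl}}'-A_{\mathrm{cl}})+\Delta^\top RK'+K^\top R\Delta .
\]
This again removes any quadratic term. With $\lVert P\rVert_2\le\operatorname{tr}P\le p(\zeta)$ and $\lVert K\rVert,\lVert K'\rVert\le k(\zeta)$, we obtain $\lVert F_P\rVert_2\le 2\big(a(\zeta)\lVert B\rVert_2p(\zeta)+\lVert R\rVert_2k(\zeta)\big)\lVert\Delta\rVert$. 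Finally $P'-P=\mathcal{S}'(F_P)$ with $\lVert\mathcal{S}'\rVert\le g(\zeta)$ gives $c_P(\zeta)$. Once the symmetric telescoping is in place, all remaining steps are routine norm bookkeeping.
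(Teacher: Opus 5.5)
Your proof is correct and follows the paper's route. It uses the same telescoped forcing terms (the paper's $M$), writes each difference as the series operator of the perturbed loop applied to them, and bounds both Gramians by $g(\zeta)$. The only deviation is the norm pairing: you bound the operator spectral-to-spectral through the order interval $\pm\lVert X\rVert_2\,\mathcal{T}'(I)$ and use $\lVert\Sigma\rVert_2,\lVert P\rVert_2$. The paper instead goes nuclear-to-spectral through $X_\pm$ and the adjoint Gramian and uses $\operatorname{tr}\Sigma,\operatorname{tr}P$; since both are controlled by $\sigma(\zeta)$ and $p(\zeta)$, the constants coincide.
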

\begin{IEEEproof}
For a Schur matrix $F$, let
$\mathcal{T}_F(X)=\sum_{t\ge0}F^tX(F^\top)^t$, the solution of
$Y=FYF^\top+X$. Two facts are used throughout. First, the Gramian
bound: $\mathcal{T}_{A'_{\mathrm{cl}}}(I)\preceq
\Sigma(K')/\lambda_{\min}(\Sigma_0)$ and
$\mathcal{T}_{A_{\mathrm{cl}}'^{\top}}(I)\preceq
P(K')/\lambda_{\min}(Q)$, and since
$\operatorname{tr}\Sigma(K')\le\sigma(\zeta)$ and
$\operatorname{tr}P(K')\le p(\zeta)$ by the pointwise form of
Lemma~\ref{lem:levels}, both Gramians have spectral norm at most
$g(\zeta)$. Second, for symmetric $X$ with positive part $X_+$ and
negative part $X_-$,
\[
\lVert\mathcal{T}_F(X)\rVert_2\le
\lVert\mathcal{T}_F(X_+)\rVert_2+\lVert\mathcal{T}_F(X_-)\rVert_2
\le\operatorname{tr}\mathcal{T}_F(X_+)
+\operatorname{tr}\mathcal{T}_F(X_-)
\le\lVert\mathcal{T}_{F^\top}(I)\rVert_2\,\lVert X\rVert_*,
\]
by cyclicity of the trace. Now subtract the two covariance equations:
$\Sigma'-\Sigma=\mathcal{T}_{A'_{\mathrm{cl}}}(M)$ with
$M=\Delta A\,\Sigma A_{\mathrm{cl}}'^\top
+A_{\mathrm{cl}}\Sigma\,\Delta A^\top$ and
$\Delta A=-B(K'-K)$. Since
$\lVert M\rVert_*\le2\lVert B\rVert_2\,a(\zeta)\,
\sigma(\zeta)\lVert K'-K\rVert$, the two facts give the first
claim. The value matrices subtract the same way, with
$M=\Delta A^\top PA'_{\mathrm{cl}}
+A_{\mathrm{cl}}^\top P\Delta A
+\Delta K^\top RK'+K^\top R\,\Delta K$, whose nuclear norm is at
most $2\big(a(\zeta)\lVert B\rVert_2\,p(\zeta)
+\lVert R\rVert_2\,k(\zeta)\big)\lVert K'-K\rVert$; the same
two facts give the second claim.
\end{IEEEproof}
\begin{corollary}[residual perturbation]\label{cor:eperturb}
On the same set,
$\lVert E(K')-E(K)\rVert\le c_E(\zeta)\lVert K'-K\rVert$ with
\[
c_E(\zeta)=h(\zeta)
+\big(\lVert B\rVert_2^2\,k(\zeta)
+\lVert B\rVert_2\lVert A\rVert_2\big)\,c_P(\zeta).
\]
\end{corollary}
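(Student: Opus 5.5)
The bound follows by splitting the residual difference according to its definition $E=HK-B^\top PA$, with $H=R+B^\top PB$, and then controlling each piece with Lemma~\ref{lem:perturb} and the pointwise level bounds. Write
\[
E(K')-E(K)=H(K')K'-H(K)K-B^\top\big(P(K')-P(K)\big)A,
\]
and split the first difference as
\[
H(K')K'-H(K)K=H(K')(K'-K)+\big(H(K')-H(K)\big)K .
\]
Since $H(K')-H(K)=B^\top(P(K')-P(K))B$, the residual difference becomes
\[
E(K')-E(K)=H(K')(K'-K)+B^\top\big(P(K')-P(K)\big)\big(BK-A\big).
\]
The last factor is $-A_{\mathrm{cl}}$ at $K$, but I will bound it crudely by the triangle inequality to match the stated constant.

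The terms are then bounded one at a time, using $\lVert XY\rVert\le\lVert X\rVert_2\lVert Y\rVert$ for Frobenius norms.

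\emph{First term.} Because $K'$ lies in the set, $\lVert P(K')\rVert_2\le\operatorname{tr}P(K')\le p(\zeta)$, by the pointwise form of Lemma~\ref{lem:levels}. Hence $\lVert H(K')\rVert_2\le h(\zeta)$, and the first term is at most $h(\zeta)\lVert K'-K\rVert$.

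\emph{Second term.} It is at most
\[
\lVert B\rVert_2\,\lVert P(K')-P(K)\rVert_2\,\big(\lVert B\rVert_2\lVert K\rVert_2+\lVert A\rVert_2\big).
\]
Here $\lVert K\rVert_2\le\lVert K\rVert\le k(\zeta)$ by the third bound of Lemma~\ref{lem:levels}. Lemma~\ref{lem:perturb} gives $\lVert P(K')-P(K)\rVert_2\le c_P(\zeta)\lVert K'-K\rVert$. Multiplying these yields $\big(\lVert B\rVert_2^2k(\zeta)+\lVert B\rVert_2\lVert A\rVert_2\big)c_P(\zeta)\lVert K'-K\rVert$.

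Adding the two bounds gives exactly $c_E(\zeta)$.

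The only delicate point is the order of the splitting. $H$ must be evaluated at $K'$ and multiply $(K'-K)$, while the $K$ left standing alone must be the unprimed one. Either assignment works by symmetry, since both gains satisfy the level bounds, so this is bookkeeping rather than a real obstacle. All the substantive work was already done in Lemma~\ref{lem:perturb}.
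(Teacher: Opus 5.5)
Your proof is the paper's argument. The paper also uses the product rule to write $E(K')-E(K)$ as $H(K'-K)+B^\top\Delta P\,B\,K'-B^\top\Delta P\,A$, with $H$ evaluated at $K$ and the bare gain at $K'$. This is the mirror image of your assignment, and the difference is immaterial. The paper then bounds the three terms by $h(\zeta)$, $\lVert B\rVert_2^2k(\zeta)c_P(\zeta)$ and $\lVert B\rVert_2\lVert A\rVert_2c_P(\zeta)$, each multiplied by $\lVert K'-K\rVert$.

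One norm caveat, which the paper's proof also leaves implicit. Since $\lVert E(K')-E(K)\rVert$ is a Frobenius norm, your all-spectral bound $\lVert B\rVert_2\lVert\Delta P\rVert_2(\lVert B\rVert_2\lVert K\rVert_2+\lVert A\rVert_2)$ is not a valid general inequality when $m>1$. Instead, put the Frobenius norm on $\Delta P$: $\lVert B^\top\Delta P(BK-A)\rVert\le\lVert B\rVert_2\lVert\Delta P\rVert\,\lVert BK-A\rVert_2$. This is legitimate because the trace argument in the proof of Lemma~\ref{lem:perturb} bounds the nuclear norm of $P(K')-P(K)$, and hence its Frobenius norm, by $c_P(\zeta)\lVert K'-K\rVert$.
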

\begin{IEEEproof}
$E'-E=B^\top\!\Delta P\,B\,K'+H\,\Delta K
-B^\top\!\Delta P\,A$; bound the three terms by
$\lVert B\rVert_2^2c_Pk$, $h\,\lVert\Delta K\rVert$, and
$\lVert B\rVert_2\lVert A\rVert_2c_P$ respectively.
\end{IEEEproof}
Every constant above is an explicit function of
$(\lVert A\rVert_2,\lVert B\rVert_2,\lambda_{\min}(Q),
\lambda_{\min}(R),\lambda_{\min}(\Sigma_0),\zeta)$, evaluated per
plant.
\subsection{The Lipschitz and Separation Constants}\label{app:LGd}
\begin{lemma}[margin inflation]\label{lem:margin}
Fix a plant, let $\zeta>0$, and let
$c_J(\eta):=c_P(\eta)\operatorname{tr}\Sigma_0$ denote the
pointwise cost-Lipschitz constant obtained from
Lemma~\ref{lem:perturb}. If $J(K)\le\zeta$ and
$\lVert K'-K\rVert<\varepsilon_0(\zeta):=\zeta/c_J(2\zeta)$,
then $K'$ is stabilizing and $J(K')\le2\zeta$.
\end{lemma}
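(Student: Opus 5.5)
We argue by continuation along the segment $K_\theta=K+\theta(K'-K)$, $\theta\in[0,1]$. The difficulty is that Lemma~\ref{lem:perturb} only gives a Lipschitz bound between two gains that are \emph{both} already known to be stabilizing with cost at most the given level. So we cannot apply it to $K'$ directly. Instead we define
\[
\theta^\star=\sup\{\theta\in[0,1]:\ K_{\vartheta}\ \text{is stabilizing and}\ J(K_\vartheta)\le2\zeta\ \text{for all}\ \vartheta\in[0,\theta]\}
\]
and show $\theta^\star=1$ with the endpoint included. The set is nonempty, since $\theta=0$ qualifies: $J(K)\le\zeta\le2\zeta$.

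Take any $\theta<\theta^\star$. Both $K$ and $K_\theta$ are stabilizing with cost at most $2\zeta$, so Lemma~\ref{lem:perturb} applies at level $2\zeta$. It gives $\lVert P(K_\theta)-P(K)\rVert_2\le c_P(2\zeta)\,\theta\lVert K'-K\rVert$. Hence
\[
|J(K_\theta)-J(K)|=|\operatorname{tr}((P(K_\theta)-P(K))\Sigma_0)|\le \lVert P(K_\theta)-P(K)\rVert_2\operatorname{tr}\Sigma_0\le c_J(2\zeta)\,\lVert K'-K\rVert .
\]
The middle inequality uses $|\operatorname{tr}(XY)|\le\lVert X\rVert_2\operatorname{tr}Y$ for $Y\succeq0$. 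Since $\lVert K'-K\rVert<\varepsilon_0(\zeta)=\zeta/c_J(2\zeta)$, we obtain the strict bound
\[
J(K_\theta)<\zeta+\zeta=2\zeta .
\]
Let $\delta:=\zeta-c_J(2\zeta)\lVert K'-K\rVert>0$. Then $J(K_\theta)\le2\zeta-\delta$ uniformly for $\theta<\theta^\star$, and this uniform slack is what the closing step uses.

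\textbf{Closing the argument.} This step is the main obstacle: we must rule out that stability is lost exactly at $\theta^\star$, or that $\theta^\star<1$.

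Suppose first that $K_{\theta^\star}$ were not stabilizing. Then $K_\theta\to K_{\theta^\star}\in\partial\{\text{stabilizing gains}\}$ as $\theta\uparrow\theta^\star$. By the boundary blow-up argument of Proposition~\ref{prop:coercive}, which applies verbatim to a single plant, $J(K_\theta)\to\infty$. This contradicts the uniform bound $2\zeta-\delta$. Hence $K_{\theta^\star}$ is stabilizing.

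Since $J$ is continuous on the open stabilizing set (Proposition~\ref{prop:analytic}), we get $J(K_{\theta^\star})\le2\zeta-\delta$. So $\theta^\star$ itself belongs to the set defining the supremum.

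Suppose next that $\theta^\star<1$. Openness of the stabilizing set and continuity of $J$ give a neighborhood of $\theta^\star$ on which $K_\theta$ stays stabilizing with $J(K_\theta)<2\zeta$. This contradicts maximality of $\theta^\star$. Therefore $\theta^\star=1$, so $K'=K_1$ is stabilizing and satisfies $J(K')\le2\zeta$, which is the claim.

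The only subtle point is the use of the lemma at the doubled level $2\zeta$. It is valid for every $\theta<\theta^\star$ precisely by the definition of $\theta^\star$, and that is why the threshold $\varepsilon_0$ evaluates $c_J$ at $2\zeta$ rather than at $\zeta$.
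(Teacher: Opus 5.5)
Your proposal is correct and follows essentially the same route as the paper: continuation along the segment from $K$ to $K'$, Lemma~\ref{lem:perturb} applied at the doubled level $2\zeta$ to show the cost cannot climb from $\zeta$ to $2\zeta$ within distance $\varepsilon_0(\zeta)$, and the boundary blow-up of Proposition~\ref{prop:coercive} to rule out loss of stability. Your supremum formulation tracks stability and the cost bound jointly and extracts the uniform slack $\delta$; this makes rigorous the ``first crossing'' step that the paper states more informally.
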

\begin{IEEEproof}
Along the segment from $K$ to $K'$, the set where $J<2\zeta$ is open
and contains the start. If the cost first reached $2\zeta$ at a point
at distance $\delta^*$, both endpoints of the traversed sub-segment
would have cost at most $2\zeta$, so Lemma~\ref{lem:perturb} would
give $2\zeta-\zeta\le c_J(2\zeta)\,\delta^*$, that is,
$\delta^*\ge\varepsilon_0(\zeta)$; hence no crossing occurs before
$\varepsilon_0(\zeta)$, and stability persists because the cost
blows up at the boundary of the stabilizing set
(Proposition~\ref{prop:coercive}).
\end{IEEEproof}
\begin{proposition}[the constant $L$]\label{prop:L}
Let $c\in(\mathcal{J}^\star,c^\dagger)$, set $\zeta_s=c/\nu_s$
and $\bar\zeta_s=2\zeta_s$, and let
$\varepsilon:=\min_s\varepsilon_0(\zeta_s)/2$. On the
$\varepsilon$-inflation of $\mathcal{G}_c$, every pointwise cost
satisfies $J_s\le\bar\zeta_s$, and $\nabla\mathcal{J}$ is
Lipschitz with the explicit constant
\[
L=2\sum_{s=1}^{S}\nu_s\Big(c_{E,s}(\bar\zeta_s)\,
\sigma_s(\bar\zeta_s)+e_s(\bar\zeta_s)\,
c_{\Sigma,s}(\bar\zeta_s)\Big),
\qquad
e_s(\zeta):=h_s(\zeta)k_s(\zeta)
+\lVert B(\rho_s)\rVert_2\lVert A(\rho_s)\rVert_2\,p_s(\zeta).
\]
\end{proposition}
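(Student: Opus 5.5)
The proof has two parts. First we show that every pointwise cost on the $\varepsilon$-inflation of $\mathcal{G}_c$ is at most $\bar\zeta_s$, using Lemma~\ref{lem:margin}. Then we differentiate the gradient blocks \eqref{eq:grad} by a product rule, using the perturbation bounds of Lemma~\ref{lem:perturb} and Corollary~\ref{cor:eperturb} at level $\bar\zeta_s$.

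\emph{Step 1 (margin).} Take $\mathbf{K}\in\mathcal{G}_c$ and $\mathbf{K}'$ with $\lVert\mathbf{K}'-\mathbf{K}\rVert\le\varepsilon$. Nonnegativity of the pointwise costs gives $\nu_sJ_s\le\mathcal{J}\le c$, so $J_s(K_{\mathbf{K}}(\rho_s))\le\zeta_s$. The deployed increment obeys
\[
\lVert K_{\mathbf{K}'}(\rho_s)-K_{\mathbf{K}}(\rho_s)\rVert\le\sum_i w_i(\rho_s)\lVert K_i'-K_i\rVert\le\lVert\mathbf{K}'-\mathbf{K}\rVert\le\varepsilon<\varepsilon_0(\zeta_s).
\]
The first inequality is the triangle inequality. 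The second holds because the $w_i(\rho_s)$ are convex weights and each $\lVert K_i'-K_i\rVert$ is at most the tuple norm. Lemma~\ref{lem:margin} then shows that $K_{\mathbf{K}'}(\rho_s)$ stabilizes plant $s$ and that $J_s\le2\zeta_s=\bar\zeta_s$. Hence the inflation lies in $\mathcal{K}$ and every pointwise cost on it is bounded by $\bar\zeta_s$.

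\emph{Step 2 (Lipschitz estimate).} Take two points $\mathbf{K}$ and $\mathbf{K}'$ in the inflation. We may first assume they are joined by a segment lying in the inflation, or that both satisfy the level-$\bar\zeta_s$ conditions, which is all that Lemma~\ref{lem:perturb} requires. For each $s$, split
\[
E_s'\Sigma_s'-E_s\Sigma_s=(E_s'-E_s)\Sigma_s'+E_s(\Sigma_s'-\Sigma_s).
\]
The first term is bounded by $c_{E,s}(\bar\zeta_s)\,\sigma_s(\bar\zeta_s)\lVert\Delta K_s\rVert$, using Corollary~\ref{cor:eperturb} and $\lVert\Sigma_s'\rVert_2\le\operatorname{tr}\Sigma_s'\le\sigma_s(\bar\zeta_s)$. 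The second is bounded by $e_s(\bar\zeta_s)\,c_{\Sigma,s}(\bar\zeta_s)\lVert\Delta K_s\rVert$, using the bound $\lVert E_s\rVert\le e_s$ from the proof of Corollary~\ref{cor:gradbound} at level $\bar\zeta_s$ together with Lemma~\ref{lem:perturb}. Here $\Delta K_s$ is the deployed increment, and $\lVert\Delta K_s\rVert\le\lVert\mathbf{K}'-\mathbf{K}\rVert$ as in Step 1. Inserting these bounds into \eqref{eq:grad} with $\lVert w(\rho_s)\rVert_2\le1$, and applying the triangle inequality over $s$ exactly as in Corollary~\ref{cor:gradbound}, yields $\lVert\nabla\mathcal{J}(\mathbf{K}')-\nabla\mathcal{J}(\mathbf{K})\rVert\le L\lVert\mathbf{K}'-\mathbf{K}\rVert$ with the stated $L$.

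\emph{Main obstacle.} Lemma~\ref{lem:perturb} is stated for pairs of gains that both sit at level $\zeta$ for one plant. Its proof uses only the Gramian bounds at the two endpoints, so it applies to any two points of the inflation, even if the segment between them leaves it. This matters because the inflation need not be convex, and the descent lemma only needs the estimate along segments of length at most $\varepsilon$ starting in $\mathcal{G}_c$. I would verify that the proof of Lemma~\ref{lem:perturb} uses no path argument. If it turns out to need one, I would restrict the claim to pairs within distance $\varepsilon$ of a common point of $\mathcal{G}_c$. Such segments lie entirely in the inflation, and the $\varepsilon_0/2$ choice of $\varepsilon$ leaves exactly the room needed for this. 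That restricted version is all that Proposition~\ref{prop:analytic}(ii) and Theorem~\ref{thm:rate} use. The hypothesis $c<c^\dagger$ plays no role in the estimate and only places the result in the setting of Theorem~\ref{thm:rate}.
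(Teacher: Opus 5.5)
Your proposal is correct and follows the paper's proof essentially step for step. You apply Lemma~\ref{lem:margin} per plant to get $J_s\le\bar\zeta_s$ on the inflation, then bound the split $(E_s'-E_s)\Sigma_s'+E_s(\Sigma_s'-\Sigma_s)$ using Corollary~\ref{cor:eperturb}, Lemma~\ref{lem:perturb}, and the deployed-increment estimate, and sum over $s$ as in Corollary~\ref{cor:gradbound}. The concern in your last paragraph resolves in your favor: Lemma~\ref{lem:perturb} is stated and proved as a two-endpoint bound, using only the Gramian and trace bounds at $K$ and $K'$. The Lipschitz estimate therefore holds for any pair in the possibly nonconvex inflation, and your fallback restriction is unnecessary.
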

\begin{IEEEproof}
The cost bound on the inflation is Lemma~\ref{lem:margin} applied per
plant; in particular the inflation is a compact subset of
$\mathcal{K}$, so Proposition~\ref{prop:analytic}(ii) applies with
this $\varepsilon$. For the Lipschitz constant, write the per-plant
field $M_s=E_s\Sigma_s$ and subtract at two schedules in the
inflation:
$M_s'-M_s=(E_s'-E_s)\Sigma_s'+E_s(\Sigma_s'-\Sigma_s)$, so
$\lVert M_s'-M_s\rVert\le
\big(c_{E,s}\sigma_s+e_s\,c_{\Sigma,s}\big)
\lVert\mathbf{K}'-\mathbf{K}\rVert$, using
$\lVert K_{\mathbf{K}'}(\rho_s)-K_{\mathbf{K}}(\rho_s)\rVert
\le\lVert w(\rho_s)\rVert_2
\lVert\mathbf{K}'-\mathbf{K}\rVert
\le\lVert\mathbf{K}'-\mathbf{K}\rVert$. Summing the gradient
tuple as in Corollary~\ref{cor:gradbound} gives the claim.
\end{IEEEproof}
\begin{proposition}[the distance $d$]\label{prop:d}
For $c\in(\mathcal{J}^\star,c^\dagger)$, the distance from
$\mathcal{G}_c$ to the remainder of the sublevel set satisfies
\[
d\;\ge\;\frac{2\,(c^\dagger-c)}{G(c^\dagger)},
\]
with $G$ the gradient bound of Corollary~\ref{cor:gradbound}.
\end{proposition}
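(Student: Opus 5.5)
The plan is to argue by contradiction with a two-sided first-exit estimate along a straight segment, followed by a flow argument that uses the definition of the critical level $c^\dagger$. Fix $\mathbf{K}\in\mathcal{G}_c$ and a point $\mathbf{K}'$ of $\{\mathcal{J}\le c\}\setminus\mathcal{G}_c$. Suppose that $\lVert\mathbf{K}'-\mathbf{K}\rVert<2(c^\dagger-c)/G(c^\dagger)$. The aim is to show that $\mathbf{K}'$ then lies in $\mathcal{G}_c$, which is a contradiction. Taking the infimum over such pairs gives the bound on $d$.

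\emph{Step 1 (segment stays below $c^\dagger$).} I would run the argument of Lemma~\ref{lem:margin} with the gradient bound of Corollary~\ref{cor:gradbound} in place of the pointwise Lipschitz constant. Start from any point $\mathbf{K}_0$ with $\mathcal{J}(\mathbf{K}_0)\le c$ and move along a segment of length $r<(c^\dagger-c)/G(c^\dagger)$. The set of parameters where the segment is feasible with $\mathcal{J}<c^\dagger$ is relatively open and contains the start. Suppose it ended at some parameter $t^*$. By Proposition~\ref{prop:coercive} the cost blows up at $\partial\mathcal{K}$, so the exit cannot be through the boundary; hence $\mathcal{J}=c^\dagger$ there, and $\mathcal{J}\le c^\dagger$ on $[0,t^*]$. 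The bound $\lVert\nabla\mathcal{J}\rVert\le G(c^\dagger)$ on $\{\mathcal{J}\le c^\dagger\}$ and the mean value theorem would then give $c^\dagger-c\le G(c^\dagger)\,t^*\le G(c^\dagger)\,r$, which is impossible. Now apply this from $\mathbf{K}$ and from $\mathbf{K}'$ toward the midpoint of $[\mathbf{K},\mathbf{K}']$. Each half has length below $(c^\dagger-c)/G(c^\dagger)$. So the whole segment lies in $\mathcal{K}\cap\{\mathcal{J}<c^\dagger\}$. By compactness its maximum cost is some $c''<c^\dagger$, and $c''\ge c$ because the segment contains $\mathbf{K}$. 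Consequently $\mathbf{K}'\in\mathcal{G}_{c''}$, the component of $\{\mathcal{J}\le c''\}$ containing $\mathbf{K}^\star$.

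\emph{Step 2 (flow back into $\mathcal{G}_c$).} Since $c''<c^\dagger$, the definition of the critical level guarantees that $\mathcal{G}_{c''}$ contains no stationary point other than $\mathbf{K}^\star$. (If the supremum is attained only in the limit, I would pick a level $c'''\in[c'',c^\dagger)$ that qualifies and note $\mathcal{G}_{c''}\subseteq\mathcal{G}_{c'''}$.) Consider the negative gradient flow from $\mathbf{K}'$. The cost is nonincreasing along it, so the trajectory stays in the compact connected set $\mathcal{G}_{c''}$ and exists for all time. Since $\mathcal{J}$ is analytic (Proposition~\ref{prop:analytic}), the Łojasiewicz gradient inequality gives that this bounded trajectory has finite length and converges to a stationary point. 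That point must be $\mathbf{K}^\star$. The trajectory together with its limit is a continuous path from $\mathbf{K}'$ to $\mathbf{K}^\star$ along which $\mathcal{J}\le\mathcal{J}(\mathbf{K}')\le c$. Hence $\mathbf{K}'$ lies in the component $\mathcal{G}_c$, contradicting the choice of $\mathbf{K}'$.

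The main obstacle is Step 2, the topological claim that components cannot merge below $c^\dagger$. The delicate points are convergence of the flow to a single point, which the Łojasiewicz property of analytic functions supplies, and the fact that the critical level is defined as a supremum. Step 1 is routine once the first-exit argument is written carefully, using coercivity to exclude leaving $\mathcal{K}$.
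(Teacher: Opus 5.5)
Your proof is correct. Its quantitative core matches the paper's: the paper also integrates $\lVert\nabla\mathcal{J}\rVert\le G(c^\dagger)$ from each end of the segment to the first and last points $z_1,z_2$ where $\mathcal{J}=c^\dagger$. That is your midpoint split, read directly rather than by contradiction.

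The genuine difference is the topological step that forces the segment up to $c^\dagger$. The paper invokes the mountain-pass theorem, with Palais--Smale supplied by coercivity and boundary blow-up. It concludes that the pass value between $\mathcal{G}_c$ and another component of $\{\mathcal{J}\le c\}$ is attained at a stationary point of the merged component other than $\mathbf{K}^\star$, and so is at least $c^\dagger$. You instead show that components cannot merge below $c^\dagger$, by following the negative gradient flow from $\mathbf{K}'$ inside the compact set $\mathcal{G}_{c''}$, whose only stationary point is $\mathbf{K}^\star$.

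The paper's route is shorter, but it leans on a localized mountain-pass statement (the critical point lies in the component joining the two endpoints) that it cites rather than proves. Yours needs only an ODE confined to a compact subset of $\mathcal{K}$. It also makes explicit two points the paper leaves implicit: that $c^\dagger$ being a supremum is harmless because the sets $\mathcal{G}_c$ are nested, and that the segment cannot leave $\mathcal{K}$, by the boundary blow-up of Proposition~\ref{prop:coercive}.

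Two small remarks.
\begin{itemize}
\item Your claim $c''\ge c$ does not follow from $\mathbf{K}$ lying on the segment; that only gives $c''\ge\mathcal{J}(\mathbf{K})$. Replacing $c''$ by $\max\{c'',c\}<c^\dagger$ repairs this at no cost.
\item The {\L}ojasiewicz inequality, and with it analyticity, is more than you need. Along the flow, $\int_0^\infty\lVert\nabla\mathcal{J}\rVert^2\,dt\le\mathcal{J}(\mathbf{K}')-\mathcal{J}^\star$, and the integrand is uniformly continuous in $t$. So every $\omega$-limit point is stationary, and the nonempty $\omega$-limit set is $\{\mathbf{K}^\star\}$. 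The closure of the forward orbit is then a connected subset of $\{\mathcal{J}\le c\}$ containing both $\mathbf{K}'$ and $\mathbf{K}^\star$, which already places $\mathbf{K}'$ in $\mathcal{G}_c$.
\end{itemize}
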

\begin{IEEEproof}
Let $x\in\mathcal{G}_c$ and $y$ in another component, and consider
the straight segment between them. The pass value between the two
components, the infimum over connecting paths of the maximum of
$\mathcal{J}$, is attained at a stationary point by the mountain-pass
theorem~\cite{ambrosetti1973dual}, whose Palais--Smale condition
holds here because $\mathcal{J}$ is coercive with boundary blow-up;
that stationary point is distinct from $\mathbf{K}^\star$ because
its value is at least $c>\mathcal{J}^\star$, and it lies in the
merged component, so the pass value is at least $c^\dagger$ by the
definition of the critical level. The cost along the segment
therefore rises from at most $c$ to at least $c^\dagger$ and
returns. Let $z_1$ be the first point where $\mathcal{J}=c^\dagger$
and $z_2$ the last; on the sub-segments $[x,z_1]$ and $[z_2,y]$ the
cost stays at most $c^\dagger$, so
$\lVert\nabla\mathcal{J}\rVert\le G(c^\dagger)$ there, and
integrating along each gives
$c^\dagger-c\le G(c^\dagger)\lVert z_1-x\rVert$ and
$c^\dagger-c\le G(c^\dagger)\lVert y-z_2\rVert$. Adding the two
completes the proof.
\end{IEEEproof}
The bound is explicit once $c^\dagger$ is known; in this paper
$c^\dagger$ is computed only for the two-dimensional counterexample,
and certifying a lower bound on it without first locating the
minimizer is the second open problem recorded in the conclusion of
the letter.
\section{Extended Numerical Studies}\label{app:experiments}
All experiments share one protocol. Minimizers are located by
gradient descent with Armijo backtracking from pointwise Riccati warm
starts, run to gradient norm $10^{-10}$ or the stated iteration cap.
Sample points on a level set are generated by drawing directions
uniformly on the unit sphere under a fixed seed, walking outward
geometrically until the target level is bracketed, and bisecting onto
it; radial shells add samples at relative radii between $10^{-3}$ and
$10^{-1}$. Seeds are fixed per experiment: 11--13 for the identity
check, 21--24 for the ratio sweep, 51--53 for the ratio
maximization, and 41--44 for the convergence runs. All computations
are dense double precision, with Lyapunov equations solved by
vectorization. Code and data reproducing every numerical result in
the letter and in this document are available at
\url{https://github.com/Rainlabuw/hidden-star-convexity}.
Table~\ref{tab:identity} refines the identity check of
\S\ref{sec:numerics} with median errors alongside the maxima; the
medians sit near the rounding unit of double precision. For the ratio
sweep, the complete record is Table~\ref{tab:tau} in the main body;
this appendix adds nothing to it.
\begin{table}[!ht]
\caption{Relative discrepancy between the two sides
of \eqref{eq:star}.}
\label{tab:identity}
\centering
\begin{tabular}{lccc}
\toprule
system & samples & max & median\\
\midrule
second-order & 240 & $1.3\times10^{-14}$ & $2.3\times10^{-16}$\\
Duffing & 240 & $2.7\times10^{-14}$ & $5.5\times10^{-16}$\\
wing & 144 & $5.1\times10^{-13}$ & $9.8\times10^{-16}$\\
\bottomrule
\end{tabular}
\end{table}
The ratio maximization behind \S\ref{sec:numerics} proceeds per
level: starts are drawn on the level set by ray bisection (even
starts) and at a uniform interior radius (odd starts); each start
ascends a forward-difference gradient of $\tau$ with backtracking,
and a step that leaves $\{\mathcal{J}\le c\}$ is projected back to
the boundary by bisection along its own direction, with eighty steps
per start on the second-order and Duffing systems and fifty on the
wing. The maximizing gains sit in the interior of the component: the
level-set maxima of Table~\ref{tab:tau} are lower because the
suprema are not attained on level sets. Ten starts of one hundred
twenty steps per level give wing maxima $0.98$, $1.02$, $1.02$,
$1.09$, $1.52$, and $1.24$ at
$c/\mathcal{J}^\star\in\{1.1,\,1.5,\,2,\,3,\,5,\,8\}$. The
violating gain of \S\ref{sec:numerics}, of cost
$1.37\,\mathcal{J}^\star$ with $\tau=1.011$, agrees across two
independent Lyapunov solvers to six digits, and the straight segment
from $\mathbf{K}^\star$ to it stays below cost
$1.370\,\mathcal{J}^\star$, so it lies in $\mathcal{G}_c$ at its
own level.
The constant steps of Fig.~\ref{fig:convergence} are chosen by a
probe rule: the largest $\eta=2^{-k}$ for which two hundred probe
iterations never increase the cost. The wing's Hessian at the located
minimizer, computed by central differences of the gradient, has
extreme eigenvalues $1.68\times10^{4}$ and $6.01\times10^{6}$, a
spread of roughly 360; entering through the product $\eta\mu$, this
spread is the quantitative source of the shallow wing slope in
Fig.~\ref{fig:convergence}.
The conservatism that motivates direct optimization is quantified on
the wing. A common-Lyapunov guaranteed-cost PDC design at the four
TP vertices (all vertex pairs, cost bound minimized by an
interior-point solver, feasibility of the returned solution verified
to $10^{-7}$) yields a gain with scheduled cost
$\mathcal{J}=6.69\times10^{4}$, against the policy-gradient optimum
$4.27\times10^{4}$: a factor of $1.57$. The pointwise-Riccati
least-squares fit used to initialize descent already attains the
optimal cost to four digits, so on this model the conservatism lies
in the common-Lyapunov synthesis, not in the difficulty of the
optimization.
\section{Detailed Analysis of the Counterexample}\label{app:trap}
The example of \eqref{eq:mirror} is built by symmetry breaking. At
$\nu=(\tfrac12,\tfrac12)$ the two plants exchange under
$K\mapsto-K$, so the landscape is symmetric and its two minima have
equal cost; descent has no reason to prefer one. Tilting the weights
to $\nu=(0.51,\,0.49)$ breaks the tie without destroying either
basin: one minimum becomes strictly global, the other strictly
spurious, and both persist because the basins are separated by a
ridge that a small tilt cannot remove. This is the smallest mechanism
we know that defeats descent for a scheduled cost, and it needs only
two plants and a single $1\times2$ gain.
Gradient descent from $K^0=[-0.6577,\ 0.1363]$ converges to
$\mathbf{K}^\star=[-0.6582,\ 0.1362]$ with
$\mathcal{J}^\star=113.514$; from $-K^0$ it converges to the
spurious minimizer $[0.6571,\ -0.1365]$ at cost $116.831$, both to
gradient norm $2\times10^{-5}$. The critical level was computed two
ways. Bisection on connectivity (the sublevel set
$\{\mathcal{J}\le\ell\}$ is thresholded on a grid over the gain
plane, its connected components containing the two minimizers are
tracked, and levels that join and levels that separate them bracket
$c^\dagger$) gives $c^\dagger\approx153.48$ on a $241\times241$
grid and $153.18$ on a $481\times361$ grid; pixel connectivity
requires a corridor one cell wide, so these estimates approach the
critical level from above. Newton's method on the gradient, started
inside the joining corridor, then locates the ridge saddle at
$K=[0.0158,\ -0.0038]$, where the gradient norm is below
$10^{-12}$ and the Hessian eigenvalues are $-1.8\times10^{2}$ and
$5.4\times10^{4}$; its cost is the exact critical level,
$c^\dagger=153.16$, quoted throughout this appendix. The two basins
remain separated over a cost range of nearly forty units above the
spurious value.
At the spurious point, \eqref{eq:transfer} forces
$\tau=1+(\mathcal{J}-\mathcal{J}^\star)/Q_H$; with the measured
gap $3.318$ and $Q_H=96.14$, the forced value is $1.0345106$, while
the ratio computed directly from the two gradient fields is
$1.0345103$, an agreement of $2.6\times10^{-7}$ in relative terms,
limited by the residual gradient at the located point. This is the
sharpest numerical corroboration of Corollary~\ref{cor:transfer} in
the paper: the two sides of the comparison share no computational
path.
The dependence of the certificate on the reference is concrete on
this example. With the reference taken at the spurious minimizer,
the construction is unchanged: over 500 samples of the ball of
radius $0.01$ around that point, the largest sampled ratio is
$-9.84$, and the guarantees of \S\ref{sec:rate}, applied with this
reference, certify linear convergence to the spurious point on its
basin. Evaluated with the same reference, the global minimizer
gives $\tau=0.965$: a stationary point below the reference value
satisfies $\tau<1$, since \eqref{eq:transfer} makes $\tau-1$
proportional to the cost difference, which is negative in that
direction. Detection of spurious stationary points is therefore a
consequence of taking the global minimizer as the reference, not a
property of the ratio itself.
One feature of Fig.~\ref{fig:mirror} deserves emphasis. The region
$\tau>1$ does not stop at the spurious basin: near the saddle it
intrudes into the component of $\mathbf{K}^\star$ well below
$c^\dagger$. On the $481\times361$ grid, the supremum of $\tau$
over $\mathcal{G}_c$ first reaches one near $c=132.2$; minimizing
$\mathcal{J}$ along the curve $\tau=1$ inside the component
sharpens this to $c_\tau=132.05$. With $\mathcal{J}^\star=113.51$
and $c^\dagger=153.16$, the hypothesis of Assumption~\ref{ass:tau}
fails at $47\%$ of the level range between the optimal value and
the critical level. The growth of the grid supremum is steady
rather than abrupt, passing through $-2.51$, $0.30$, $1.93$,
$3.01$, and $3.86$ at $20\%$, $40\%$, $60\%$, $80\%$, and $99\%$
of that range. Assumption~\ref{ass:tau}
therefore holds on $\mathcal{G}_c$ exactly for $c<c_\tau$. The
hypothesis fails well before the geometry does, conservatively,
which is the correct side to fail on; in particular, a component
free of spurious stationary points need not satisfy
Assumption~\ref{ass:tau}.
% ---------------- bibliography ----------------
% Same entries as the letter (embedded); extend as needed.


\begin{thebibliography}{00}
\bibitem{letter2026} S. Shakeri, P. Baranyi, and M. Mesbahi, ``Hidden
star-convexity in policy optimization for gain-scheduled LQR,''
submitted to \emph{IEEE Control Systems Letters}, 2026.
\bibitem{fazel2018global} M. Fazel, R. Ge, S. M. Kakade, and M. Mesbahi,
``Global convergence of policy gradient methods for the linear quadratic
regulator,'' in \emph{Proc. 35th Int. Conf. Mach. Learn.}, 2018,
pp. 1467--1476.
\bibitem{bu2019lqr} J. Bu, A. Mesbahi, M. Fazel, and M. Mesbahi,
``LQR through the lens of first order methods: Discrete-time case,''
2019, arXiv:1907.08921.
\bibitem{talebi2024submanifolds} S. Talebi and M. Mesbahi, ``Policy
optimization over submanifolds for linearly constrained feedback
synthesis,'' \emph{IEEE Trans. Autom. Control}, vol. 69, no. 5,
pp. 3024--3039, 2024.
\bibitem{shakeri2026rhpg} S. Shakeri, P. Baranyi, and M. Mesbahi,
``Receding-horizon policy gradient for polytopic controller
synthesis,'' 2026, arXiv:2603.29283.
\bibitem{watanabe2025hidden} Y. Watanabe and Y. Zheng, ``Revisiting
strong duality, hidden convexity, and gradient dominance in the
linear quadratic regulator,'' 2025, arXiv:2503.10964.
\bibitem{pctlqr2026} L. Cui and R. D. Braatz, ``LQR for systems with
probabilistic parametric uncertainties: A gradient method,'' 2026,
arXiv:2603.26080.
\bibitem{nesterov2006cubic} Y. Nesterov and B. T. Polyak, ``Cubic
regularization of Newton method and its global performance,''
\emph{Math. Program.}, vol. 108, no. 1, pp. 177--205, 2006.
\bibitem{necoara2019linear} I. Necoara, Y. Nesterov, and F. Glineur,
``Linear convergence of first order methods for non-strongly convex
optimization,'' \emph{Math. Program.}, vol. 175, no. 1--2,
pp. 69--107, 2019.
\bibitem{horn1991topics} R. A. Horn and C. R. Johnson, \emph{Topics
in Matrix Analysis}. Cambridge, U.K.: Cambridge Univ. Press, 1991.
\bibitem{ambrosetti1973dual} A. Ambrosetti and P. H. Rabinowitz,
``Dual variational methods in critical point theory and
applications,'' \emph{J. Funct. Anal.}, vol. 14, no. 4,
pp. 349--381, 1973.
\bibitem{hinder2020near} O. Hinder, A. Sidford, and N. Sohoni,
``Near-optimal methods for minimizing star-convex functions and
beyond,'' in \emph{Proc. 33rd Conf. Learn. Theory}, 2020,
pp. 1894--1937.
\bibitem{karimi2016linear} H. Karimi, J. Nutini, and M. Schmidt,
``Linear convergence of gradient and proximal-gradient methods under
the Polyak--{\L}ojasiewicz condition,'' in \emph{Proc. Joint Eur.
Conf. Mach. Learn. Knowl. Discov. Databases}, 2016, pp. 795--811.
\bibitem{takarics2014wing} P. Baranyi and B. Takarics, ``Aeroelastic wing
section control via relaxed tensor product model transformation
framework,'' \emph{J. Guid. Control Dyn.}, vol. 37, no. 5,
pp. 1671--1677, 2014.
\bibitem{rugh2000survey} W. J. Rugh and J. S. Shamma, ``Research on gain
scheduling,'' \emph{Automatica}, vol. 36, no. 10, pp. 1401--1425, 2000.
\bibitem{baranyi2004tp} P. Baranyi, ``TP model transformation as a way to
LMI-based controller design,'' \emph{IEEE Trans. Ind. Electron.}, vol. 51,
no. 2, pp. 387--400, 2004.
\bibitem{tanaka1998chaos} K. Tanaka, T. Ikeda, and H. O. Wang, ``A
unified approach to controlling chaos via an LMI-based fuzzy control
system design,'' \emph{IEEE Trans. Circuits Syst. I, Fundam. Theory
Appl.}, vol. 45, no. 10, pp. 1021--1040, 1998.
\bibitem{tanaka2001pdc} K. Tanaka and H. O. Wang, \emph{Fuzzy Control
Systems Design and Analysis: A Linear Matrix Inequality Approach}.
New York, NY, USA: Wiley, 2001.
\bibitem{drlqr2025} T. Fujinami, B. D. Lee, N. Matni, and G. J. Pappas,
``Policy gradient for LQR with domain randomization,'' 2025,
arXiv:2503.24371.
\bibitem{mamllqr2024} L. F. Toso, D. Zhan, J. Anderson, and H. Wang,
``Meta-learning linear quadratic regulators: A policy gradient MAML
approach for model-free LQR,'' in \emph{Proc. 6th Annu. Learn. Dyn.
Control Conf.}, ser. PMLR, vol. 242, 2024, pp. 902--915.
\bibitem{soflandscape2023} J. Duan, J. Li, X. Chen, K. Zhao, S. E. Li, and
L. Zhao, ``Optimization landscape of policy gradient methods for
discrete-time static output feedback,'' \emph{IEEE Trans. Cybern.},
vol. 54, no. 6, pp. 3588--3601, 2024.
\bibitem{giessler2025lpv} A. Gie{\ss}ler, F. Strehle, J. Illerhaus, and
S. Hohmann, ``Dynamic state-feedback control for LPV systems: Ensuring
stability and LQR performance,'' 2025, arXiv:2505.22248.
\end{thebibliography}
    \end{document}